\documentclass[11pt,reqno]{amsart}
\usepackage[letterpaper, margin=1in]{geometry}
\usepackage{amsmath,amscd,amssymb}
\usepackage{url}
\usepackage{graphicx}

\usepackage{color}
\usepackage{tikz-cd}
\usepackage{enumerate}
\usepackage[pagebackref,colorlinks,citecolor=blue,linkcolor=magenta]{hyperref}
\usepackage[linesnumbered,ruled]{algorithm2e}
\RequirePackage{amsthm,amsmath,amsfonts,amssymb}
\usepackage[utf8]{inputenc}
\usepackage{chngcntr}
\usepackage{caption}
\usepackage{subcaption}
\usepackage{float}
\usepackage{placeins}
\usepackage{xcolor}
\usepackage{tikz}
\usetikzlibrary{decorations.pathreplacing,calligraphy}
\usetikzlibrary{arrows}
\usepackage{nicematrix}
\usepackage{bbm}
\usepackage[normalem]{ulem}

\usepackage{enumitem}

\usepackage{braket, mathtools}
\usepackage{cleveref}
\usepackage{parskip}
\usepackage[nocompress, noadjust]{cite}

\def\acts{\ensuremath{  \rotatebox[origin=c]{180}{$\circlearrowright$}}}

\makeatletter
\newtheorem*{rep@theorem}{\rep@title}
\newcommand{\newreptheorem}[2]{%
\newenvironment{rep#1}[1]{%
 \def\rep@title{#2 \ref{##1}}%
 \begin{rep@theorem}}%
 {\end{rep@theorem}}}
\makeatother

\newreptheorem{theorem}{Theorem}
\newtheorem{theorem}{Theorem}
\newtheorem*{theorem*}{Theorem}
\numberwithin{theorem}{section}
\newtheorem{proposition}[theorem]{Proposition}

\newtheorem{lemma}[theorem]{Lemma}

\theoremstyle{definition}
\newtheorem{definition}[theorem]{Definition}

\newtheorem*{definition*}{Definition}

\newtheorem{remark}[theorem]{Remark}

\newtheorem{example}[theorem]{Example}

\newtheorem*{question*}{Question}


\newcommand{\cc}{\mathbb{C}}

\newcommand{\ind}{\mathbbm{1}}



\newcommand{\cl}{\mathcal{L}}

\newcommand{\cm}{\mathcal{M}}

\newcommand{\cg}{\mathcal{G}}

\newcommand{\ch}{\mathcal{H}}

\newcommand{\from}{\leftarrow}

\newcommand{\fvv}{v}
\newcommand{\cimv}{c}
\newcommand{\simv}{s}

\newcommand{\boxedstar}{%
  {\setlength\fboxsep{1pt}
   \setlength\fboxrule{0.8pt}
   \fbox{$\star$}}%
}

\tikzstyle{wB}=[circle, draw, fill=black, inner sep=0pt, minimum width=4.5pt]
\tikzstyle{wR}=[circle, draw, fill=red, inner sep=0pt, minimum width=4.5pt]
\tikzstyle{wW}=[circle, draw, fill=white, inner sep=0pt, minimum width=4.5pt]


\DeclareMathOperator{\var}{Var}

\DeclareMathOperator{\pa}{pa}
\DeclareMathOperator{\fa}{fa}

\DeclareMathOperator{\chick}{\mathcal{C}}

\DeclareMathOperator{\diag}{diag}

\begin{document}

\title[Imsets for Cyclic Graphs]{Characteristic imsets for cyclic linear causal models and the Chickering ideal}

\date{June 16, 2025}

\author{Joseph Johnson}
\address{Institutionen f\"or Matematik, KTH, SE-100 44 Stockholm, Sweden}
\email{josjohn@kth.se, joejohnsondoesnumbers@gmail.com}

\author{Pardis Semnani}
\address{Department of Mathematics, University of British Columbia, 1984 Mathematics Road, Vancouver, BC, Canada V6T 1Z2}
\email{psemnani@math.ubc.ca}

\begin{abstract}
Two directed graphs are called covariance equivalent if they induce the same set of covariance matrices, up to a Lebesgue measure zero set, on the random variables of their associated linear structural equation models.  
For acyclic graphs, covariance equivalence is characterized both structurally, via essential graphs and characteristic imsets, and transformationally, through sequences of covered edge flips. However, when cycles are allowed, only a transformational characterization of covariance equivalence has been discovered. We consider a linear map whose fibers correspond to the sets of graphs with identical characteristic imset vectors, and study the toric ideal associated to its integer matrix. Using properties of this ideal we show that directed graphs with the same characteristic imset vectors are covariance equivalent. In applications, imsets form a smaller search space for solving causal discovery via greedy search.
\end{abstract}

\keywords{family variable vector, characteristic imset vector, structural equation models, covariance equivalence, toric ideals}
\subjclass[2020]{62H22, 13F65, 62E10, 13P25, 62D20}

\maketitle
\thispagestyle{empty}

\section{Introduction}
Learning the causal structure among different random variables of interest is a major challenge in various disciplines such as computational biology, social sciences, economics, and epidemiology~\cite{Friedman2000, pearlcausalitymodels, Robins2000, SpirtesGlymourClark1993}. 
While controlled trials may be used to discover these causal structures, these trials are not possible in all settings. For instance, randomly assigning patients to potentially harmful treatments poses ethical challenges in healthcare. Therefore, the problem of causal structure learning merely through observational data (and not interventional data) has received growing attention over the past decades.

One way to encode the causal structure among random variables $X_1,\ldots,X_n$ is through a 
\emph{structural equation model (SEM)}, which postulates that every variable is a noisy function of its direct causes,~i.e.,
\begin{align*}
    X_i = f_i(X_{\pa(i)}, \varepsilon_i) \qquad i\in [n], 
\end{align*}
for functions $f_1,\ldots,f_n$, subsets $\pa(i) \subseteq [n]\setminus \{i\}$ for $i\in [n]$, and jointly independent random noises $\varepsilon_1,\ldots,\varepsilon_n$. An SEM is naturally associated with a directed graph, where each node corresponds to a variable $X_i$, and each directed edge $i\to j$ indicates that $i\in \pa(j)$. 
A typical assumption in causal learning algorithms is that this directed graph is acyclic, i.e. it does not entail any directed cycles, also known as feedback loops. Within this framework many algorithms have been developed to learn the causal graph from observed data.
These methods can often be categorized as score-based~\cite{Heckerman1994LearningBN, meek1997graphical, Chickering2002OptimalSI}, constraint-based~\cite{SpirtesGlymourClark1993,Pearl2009}, or hybrid~\cite{tsamardinos2006max,Teyssier2005OrderingBasedSA, Raskutti2018, Solus2021, Bernstein2020}.

Although extensively assumed in algorithms, the acyclicity of the causal graph is not always realistic. Feedback loops are indeed present in the equilibrium state of many systems; see~\cite{Haavelmo1943, Goldberger1972, Mason1953, Mason1956} for examples of feedback loops in models related to economy, social sciences, and electrical engineering. Such examples have motivated the development of constraint-based~\cite{richardson2013discovery} and hybrid~\cite{SemnaniRobeva+2025} algorithms that learn the causal graph even in the presence of feedback loops. These algorithms use the conditional independence statements inferred from the data to learn as much about the graph as possible. 

Under various assumptions, such as acyclicity of the graph~\cite{pearlcausalitymodels}, linearity of the functions $f_i$ in the SEM~\cite{forré2017markov}, or discreteness of the variables~\cite{forré2017markov}, distributions of the random vectors $X = (X_1, \ldots, X_n)$ in an SEM satisfy the \emph{global Markov property} with respect to its associated graph, i.e., the distributions satisfy conditional independence statements induced by the graph via its $d$-separation statements. Consequently, algorithms which only rely on the conditional independence statements inferred from observational data can only recover the causal graph up to the class of graphs which induce the same $d$-separation statements, called its \emph{Markov equivalence class}.

Different graphs can encode the same set of distributions in their associated SEMs, a phenomenon called \emph{model equivalence}. For linear SEMs with independent Gaussian noise that are associated to directed acyclic graphs (DAGs), Markov equivalence and model equivalence are the same relation; see~\cite{sullivant2018algstat}.
In the presence of cycles in the graph or non-Gaussian noise in the SEM, however, this is no longer the case: two graphs may differ in the sets of distributions they induce, yet still encode the same set of $d$-separations and thus be Markov equivalent~\cite{Lacerda2008}. 
In the cyclic setting, Markov equivalent graphs may even give rise to different sets of covariance matrices in the linear SEMs associated with them. Therefore, relying solely on the observed conditional independencies/dependencies limits the extent to which we can identify possibly cyclic causal graphs from the observed data.

We call two directed graphs \emph{covariance equivalent} if they give rise to the same set of covariance matrices in their associated linear SEMs, up to a Lebesgue measure zero set. 
A transformational characterization of covariance equivalence is given in~\cite{pmlr-v119-ghassami20a}, and~\cite{pmlr-v124-amendola20a} contains a niche  sufficient condition for two possibly cyclic mixed graphs to be covariance equivalent. In the more restricted setting of fully observed cyclic models, we prove a more widely applicable criterion for deciding if two graphs are covariance equivalent based on vector representatives.

The purpose of the present article is to explore the relationship between the \emph{characteristic imset vector} of a directed (possibly cyclic) graph and its covariance equivalence class. In~\cite{studeny2010characteristic},  Studený, Hemmecke and Lindner associate with each DAG $\cg$ on $n$ vertices its \emph{characteristic imset vector} $\cimv_\cg\in \{0,1\}^{2^n-1}$, and prove that this vector characterizes the covariance equivalence class of $\cg$; see \Cref{def:cim} and \Cref{thm: cimv and markov equivalence}. Studený shows that classical score functions for causal discovery over DAGs can be expressed as linear functionals over the convex hull of all characteristic imset vectors corresponding to DAGs~\cite{studeny2005book}, motivating linear programming approaches~\cite{hollering2024hyperplanerepresentationsinterventionalcharacteristic} for causal discovery in the acyclic setting. In this paper, we introduce a natural extension of characteristic imset vectors to accommodate graphs with directed cycles. Unlike the acyclic case, these vectors do not need to be $0/1$-vectors. Instead, the characteristic imset vector of a graph is a $0/1$-vector if and only if the graph contains no 2-cycles. 

In this article we study the extent to which characteristic imsets  refine the covariance equivalence relation in the cyclic setting. While in this setting we may not translate causal discovery via decomposable score functions, such as the BIC, into a linear program,
performing greedy search algorithms for causal discovery over the set of characteristic imset vectors is still expected to yield faster algorithms since the search space is smaller. However, we leave such algorithms to~future~work.

In algebraic statistics, vectors that represent equivalence classes are closely related to the study of discrete exponential families and toric ideals~\cite{sullivant2018algstat}. In this paradigm, sufficient statistics form unique representatives for fibers of a linear map, and Markov bases form moves that connect the elements of a fiber. We build on this connection by considering a linear map $\varphi_n: \mathbb{R}^{n\cdot 2^{n-1}} \to \mathbb{R}^{2^n-1}$  from the space of \emph{family variable vectors} \cite{cussens2013maximum, jaakkola2010learning} to the space of characteristic imset vectors such that fibers of this map correspond to the collections of directed graphs with the same characteristic imset vector. See \cite{cussens2017scoreequiv} for some prior work regarding this linear map. We analyze the algebraic structure of the toric ideal associated to the integer matrix of $\varphi_n$, which we refer to as the \emph{Chickering ideal}, and exploit the  properties of the Chickering ideal to obtain our main result:
\begin{reptheorem}{thm: main}
    Directed graphs with the same characteristic imset vector are covariance equivalent.
\end{reptheorem}

This paper is organized as follows. In Section~\ref{sec:background} we cover the relevant background on characterizations of equivalence classes of graphs, characteristic imset vectors, and family variable vectors. In \Cref{sec:chickeringIdeal}, we define the Chickering ideal, discuss the difficulties in computing a generating set, and relate the Chickering ideal to a radical binomial ideal through elimination. In \Cref{sec:matrixMoves}, we use the properties of the Chickering ideal to prove our main result. We conclude the paper with \Cref{sec:futureDirections}, which is a discussion of possible future directions for this work.

\section{Background}
\label{sec:background}

In this section we cover the necessary background on linear structural equation models, imsets, and family variable vectors. Throughout this section we fix a positive integer $n$ and consider (possibly cyclic) loopless directed graphs $\cg = \left(V(\cg),E(\cg)\right)$ on the vertex set $[n]$. For $b,c \in [n]$, we often suppress set notation for singletons, writing $b$ instead of $\{b\}$ in proofs. Additionally, in examples we often write $bc$ in place of $\{b,c\}$.

\subsection{Model and Covariance Equivalence} In this section, we introduce two notions of equivalence among directed (possibly cyclic) graphs. We begin by the following definition.
\begin{definition}
Let $v \in [n]$. We define the set of \emph{parents} of $v$ in $\cg$ to be $\pa_\cg(v) \coloneqq \{u \in [n]: u \to v \in E(\cg)\}$, and the \emph{family} of $v$ in $\cg$ to be $\fa_\cg(v) \coloneqq \pa_\cg(v) \cup \{v\}$.
\end{definition}

Linear structural equation models relate the coordinates of a random vector via linear equations whose sparsity is determined by $\cg$.

\begin{definition}
    For a directed graph $\cg$, we denote by $\cm(\cg)$ the set of all random vectors $(X_1,\ldots,X_n)$ satisfying a \emph{linear structural equation model (SEM)} associated with $\cg$, 
    \begin{align}\label{eq: linear SEM}
        X_i = \sum_{j \in \pa_\cg(i)} \lambda_{ij} X_j + \varepsilon_i \quad \text{for all } i \in [n],
    \end{align}
    where for all $j\to i\in E(\cg)$, $\lambda_{ij} \in \mathbb{R}$,  and $\varepsilon_1,\ldots,\varepsilon_n$ are jointly independent random variables. Additionally, for some fixed values of  $\lambda_{ij}$ and $\varepsilon_i$, if \Cref{eq: linear SEM} has two distinct solutions, we exclude both solutions from~$\mathcal{M}(\cg)$.
\end{definition}

We are now ready to define \emph{model equivalence} and \emph{covariance equivalence} classes of directed graphs.
\begin{definition} \label{def: distributional equivalence}
We call two directed graphs $\cg$ and $\ch$ \emph{covariance equivalent} if the set of precision matrices (i.e. inverse covariance matrices) of the random vectors in $\cm(\cg)$ and $\cm(\ch)$ have the same Euclidean closures. 
Furthermore, we call $\cg$ and $\ch$ \emph{model equivalent} if the set of distributions of the random vectors in $\cm(\cg)$ and $\cm(\ch)$ are the same.  
\end{definition}

\begin{remark}

Model equivalence is, in general, a stronger notion than covariance equivalence. However, in the case where the noise $(\varepsilon_1, \ldots, \varepsilon_n)$ is Gaussian with mean zero, covariance equivalence and model equivalence are the same relation on directed graphs.
\end{remark}

For the random vector $X$ satisfying the linear SEM in~\Cref{eq: linear SEM}, assume $\var(\varepsilon_i) = \omega_i$ for all $i\in [n]$, and define the matrix $\Lambda \in \mathbb{R}^{n\times n}$ with 
\begin{align*}
    \Lambda_{ij} = \begin{cases}
        \lambda_{ij} & \text{if } j \to i \in E(\cg), \\
        0 & \text{otherwise.}
    \end{cases}
\end{align*}
Then if $I - \Lambda$ is invertible, the covariance matrix of $X$ is equal to 
$
    (I - \Lambda)^{-1} \Omega (I - \Lambda)^{-T} 
$,
where $\Omega = \diag(\omega_1,\ldots,\omega_n)$. Therefore, the set of precision matrices of random vectors $X\in \cm(\cg)$ can be parameterized by the map
\begin{align*}
    (\Lambda, \Omega) \mapsto (I - \Lambda)^{T} \Omega^{-1} (I-\Lambda)
\end{align*}
for all positive-definite diagonal matrices $\Omega \in \mathbb{R}^{n \times n}$ and for all matrices $\Lambda \in \mathbb{R}^{n\times n}$ such that $\Lambda_{ij} = 0$ if $j \to i  \not\in E(\cg)$ and $I-\Lambda$ is invertible. We have the following reformulation of this parameterization in~\cite{pmlr-v119-ghassami20a}.

\begin{proposition}
\label{prop:sparseCholeskyFactor}
The set of precision matrices of the random vectors $X\in \cm(\cg)$ is parameterized by the map
\begin{align*}
    Q \mapsto QQ^T,
\end{align*}
where $Q$ is any full rank matrix in $\mathbb{R}^{n \times n}$ such that $Q_{ij} = 0$ when $i\neq j\in [n]$ and $i \to j \not\in E(\cg)$, and $Q_{ii} \neq 0$ for all $i\in [n]$. In particular, the sparsity of the $j$th column of $Q$ is given by $\fa_\cg(j)$.
\end{proposition}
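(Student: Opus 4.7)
The plan is to exhibit an explicit change of variables between the parameterization $(\Lambda,\Omega) \mapsto (I-\Lambda)^T \Omega^{-1}(I-\Lambda)$ given immediately before the proposition and the proposed parameterization $Q \mapsto QQ^T$, and then check that the sparsity conditions correspond.

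First, starting from a pair $(\Lambda,\Omega)$ with $\Omega = \diag(\omega_1,\dots,\omega_n)$ positive definite and $I-\Lambda$ invertible, I would set
\[
Q \;=\; (I-\Lambda)^T \Omega^{-1/2},
\]
where $\Omega^{-1/2} = \diag(\omega_1^{-1/2},\dots,\omega_n^{-1/2})$. Since $\Omega^{-1/2}$ is an invertible diagonal matrix, $Q$ is full rank, and a direct computation gives $QQ^T = (I-\Lambda)^T \Omega^{-1}(I-\Lambda)$, which is precisely the precision matrix appearing in the parameterization recalled above. Thus the image of $Q \mapsto QQ^T$ contains every precision matrix of a vector in $\cm(\cg)$.

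Next I would check the sparsity pattern. Computing entry-wise, $Q_{ij} = \bigl((I-\Lambda)^T\bigr)_{ij}\,\omega_j^{-1/2} = (\delta_{ij} - \Lambda_{ji})\,\omega_j^{-1/2}$. By the definition of $\Lambda$ in the paper, $\Lambda_{ji} \neq 0$ is only possible when $i \to j \in E(\cg)$. Therefore $Q_{ij} = 0$ whenever $i \neq j$ and $i \to j \notin E(\cg)$, while $Q_{ii} = \omega_i^{-1/2} \neq 0$. In particular the support of the $j$th column of $Q$ is contained in $\{j\} \cup \pa_\cg(j) = \fa_\cg(j)$, as claimed.

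Conversely, given any full rank $Q$ satisfying the stated sparsity and with $Q_{ii}\neq 0$, I would reconstruct $(\Lambda,\Omega)$ by setting $\omega_i \coloneqq Q_{ii}^{-2}$ and, for $i \neq j$, $\Lambda_{ji} \coloneqq -Q_{ij}/Q_{jj}$ (and $\Lambda_{ii}\coloneqq 0$). The sparsity hypothesis on $Q$ forces $\Lambda_{ji} = 0$ whenever $i\to j \notin E(\cg)$, so $\Lambda$ is a valid coefficient matrix for the linear SEM associated with $\cg$; invertibility of $I-\Lambda$ follows from the full rank of $Q$ and the invertibility of $\Omega^{-1/2}$. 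Retracing the formula shows $(I-\Lambda)^T \Omega^{-1/2} = Q$, so $QQ^T$ arises from a genuine member of $\cm(\cg)$. This shows the two image sets coincide, completing the proof.

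There is no real obstacle here: the argument is a bookkeeping exercise matching transposes and indices between $\Lambda$ and $Q$. The only mild subtlety is being careful that $\omega_j^{-1/2}$ (a positive square root) does not cause a sign mismatch — this is handled by observing that, since $Q\mapsto QQ^T$ is invariant under replacing any column of $Q$ by its negative, one may, without loss of generality, take $Q_{ii} > 0$ when reconstructing $\Omega$ from $Q$.
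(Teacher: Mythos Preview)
Your argument is correct: the change of variables $Q=(I-\Lambda)^T\Omega^{-1/2}$ and its inverse are exactly what is needed, and your handling of the sign ambiguity via column negation is the right fix. The paper does not actually supply its own proof of this proposition; it attributes the result to \cite{pmlr-v119-ghassami20a} and states it without argument, so there is nothing to compare against beyond noting that your direct verification is the standard one.
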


\begin{remark}
\label{rem:unitaryTransformations}
For two full-rank matrices $Q_1, Q_2 \in \mathbb{R}^{n \times n}$, we have $Q_1 Q_1^T = Q_2 Q_2^T$ if and only if there exists some orthogonal transformation $U \in \mathbb{R}^{n \times n}$ with $Q_2 = Q_1 U$. In this case, 
\begin{align*}
    Q_2 Q_2^T = (Q_1 U)(Q_1 U)^T = Q_1 UU^TQ_1^T = Q_1Q_1^T.
\end{align*}
Hence orthogonal transformations  can change the sparsity of the factor $Q_1$ and show that its corresponding precision matrix belongs to two different models.

\end{remark}

\begin{example}
\label{ex:modelEquivalenceOfCycles}
Consider the graphs $\cg$ and $\ch$ in Figure~\ref{fig:equivalentWithDifferentSkeletons}. Let $Q_\cg$ have the sparsity associated to $\cg$, as described in Proposition~\ref{prop:sparseCholeskyFactor}. After multiplying $Q_\cg$ by a permutation matrix, we obtain a matrix whose sparsity corresponds to $\ch$.
\begin{align*}
\bordermatrix{ 
& {\emptyset \to 1} & {4 \to 2} & {12 \to 3} & {3 \to 4} \cr
& \star &     0 & \star &     0 \cr
&     0 & \star & \star &     0 \cr
&     0 &     0 & \star & \star \cr
&     0 & \star &     0 & \star \cr
} \begin{pmatrix}
1 & 0 & 0 & 0 \\
0 & 0 & 0 & 1 \\
0 & 1 & 0 & 0 \\
0 & 0 & 1 & 0
\end{pmatrix} = \bordermatrix{ 
& {\emptyset \to 1} & {13 \to 2} & {4 \to 3} & {2 \to 4} \cr
& \star & \star &     0 &     0 \cr
&     0 & \star &     0 & \star \cr
&     0 & \star & \star &     0 \cr
&     0 &     0 & \star & \star \cr
} 
\end{align*}
The matrix on the right hand side has the sparsity given by $\ch$, and so $\cg$ and $\ch$ are covariance equivalent.
\end{example}

\begin{figure}
    \centering
\begin{tikzpicture}
\begin{scope}[shift={(0,0)}]
\node at (-0.8,0) {$\cg$};
\node[wB] at (1,1) {};
\node[wB] at (0,0) {};
\node[wB] at (1,0) {};
\node[wB] at (0.5,-0.866) {};

\node at (1.3,1) {1};
\node at (-0.3,0) {2};
\node at (1.3,0) {3};
\node[below=2pt] at (0.5,-0.866) {4};

\draw[-Stealth] (1,1)--(1,0.1);
\draw[-Stealth] (1,0)--(0.55,-0.779);
\draw[-Stealth] (0.5,-0.866)--(0.05,-0.09);
\draw[-Stealth] (0,0)--(0.9,0);
\end{scope}

\draw [ultra thick, -Stealth] (1.75,0.067)--(3.25,0.067);

\begin{scope}[shift={(4,0)}]
\node at (1.8,0) {$\ch$};

\node[wB] at (1,1) {};
\node[wB] at (0,0) {};
\node[wB] at (1,0) {};
\node[wB] at (0.5,-0.866) {};

\node at (1.3,1) {1};
\node at (-0.3,0) {2};
\node at (1.3,0) {3};
\node[below=2pt] at (0.5,-0.866) {4};

\draw[-Stealth] (1,1)--(0.07,0.07);
\draw[-Stealth] (0,0)--(0.45,-0.779);
\draw[-Stealth] (0.5,-0.866)--(0.95,-0.09);
\draw[-Stealth] (1,0)--(0.1,0);
\end{scope}

\end{tikzpicture}
    \caption{Two graphs which are covariance equivalent, but have different skeletons. These graphs are related by reversing the cycle $2 \to 3 \to 4 \to 2$ in the sense of~\cite{pmlr-v119-ghassami20a}.}
    \label{fig:equivalentWithDifferentSkeletons}
\end{figure}
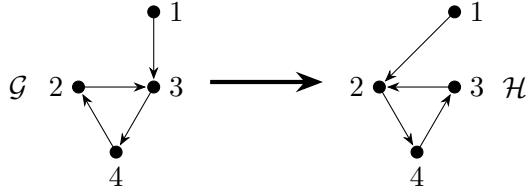

\Cref{rem:unitaryTransformations} gives rise to a \emph{transformational characterization} of covariance equivalence for possibly cyclic directed graphs~\cite{pmlr-v119-ghassami20a}, i.e. a collection of local transformations on graphs which relate the set of covariance equivalent graphs.  However, when restricted to directed acyclic graphs (DAGs), the notions of covariance equivalence and Markov equivalence coincide, and their transformational characterization, due to Chickering \cite{chickering1995transformational}, becomes much simpler.

\begin{definition}
Let $\cg$ be a directed graph. An edge $i \to j$ is \emph{covered} if $\pa_\cg(j) = \fa_\cg(i)$. A \emph{covered edge flip} is the transformation on a graph defined by reversing the orientation of a covered edge.
\end{definition}
\begin{theorem}[Chickering's Transformational Characterization, \cite{chickering1995transformational}]
Two DAGs $\cg$ and $\ch$ are covariance/Markov equivalent if and only if there exists a sequence $\cg = \cg_1, \cg_2,\dots, \cg_k = \ch$ such that for all $i\in [k-1]$, $\cg_i$ and $\cg_{i+1}$ are related by a covered edge flip.
\end{theorem}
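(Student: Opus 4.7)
The plan is to prove the biconditional in two directions. The reverse implication---that a covered edge flip preserves Markov equivalence---is the cleaner part; the forward implication requires an inductive argument exploiting a common topological ordering of the two DAGs.

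For the reverse direction, I would invoke the Verma--Pearl characterization: two DAGs are Markov equivalent if and only if they share the same skeleton and the same set of v-structures (unshielded colliders). Suppose $\cg'$ is obtained from $\cg$ by flipping a covered edge $i \to j$ with $\pa_\cg(j) = \fa_\cg(i)$. The skeleton is plainly preserved. No v-structure involving both endpoints can be destroyed or created: for any $k \neq i$ with $k \to j$ in $\cg$, the covered-edge hypothesis gives $k \in \pa_\cg(j) \setminus \{i\} = \pa_\cg(i)$, so $k \to i$ is also present in $\cg$, making the triple $\{k,i,j\}$ shielded on both sides of the flip. For any vertex $\ell \notin \{i,j\}$, the parent set $\pa_\cg(\ell)$ is unchanged, so v-structures centered at $\ell$ are unaffected.

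For the forward direction, I would proceed by induction on $d(\cg, \ch)$, the number of edges oriented oppositely in $\cg$ and $\ch$. The base case $d = 0$ is trivial. For the inductive step, I would fix a topological order $\tau$ of $\ch$ and, among all edges of $\cg$ that are inverted relative to $\tau$, extract an extremal representative $y \to x$ (with $y >_\tau x$) by choosing $y$ as $\tau$-large as possible and $x$ as $\tau$-large as possible among children of $y$ preceding $y$ in $\tau$. The crux is to show that this extremally chosen edge must be covered in $\cg$. Granted this, flipping $y \to x$ yields $\cg'$ with $d(\cg', \ch) = d(\cg, \ch) - 1$, and $\cg'$ is Markov equivalent to $\cg$ by the reverse direction, so the induction closes.

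The main obstacle is justifying the covered-edge claim. To show $\pa_\cg(x) = \pa_\cg(y) \cup \{y\}$, one must rule out any vertex $k$ in the symmetric difference of $\pa_\cg(x)$ and $\pa_\cg(y) \cup \{y\}$ by combining skeleton equality and v-structure equality between $\cg$ and $\ch$. The argument splits into the case where $k$ is non-adjacent to its partner endpoint---in which case a v-structure is present in exactly one of $\cg$ or $\ch$ (since the orientation of $xy$ is swapped between them), contradicting equivalence---and the case where $k$ is adjacent, in which the extremal choice of $(y,x)$ together with a $\tau$-comparison forces a contradiction with the positioning of $k$ relative to $y$ and $x$. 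This careful case dissection, which forms the technical core of Chickering's original argument, is what makes the forward direction non-trivial.
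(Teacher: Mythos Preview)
The paper does not supply its own proof of this statement; it is quoted as background from Chickering (1995). Your plan mirrors Chickering's original strategy, and the reverse direction is fine. The forward direction, however, has a genuine gap in the extremal selection rule.

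You work with a topological order $\tau$ of $\ch$ and, among edges of $\cg$ inverted relative to $\tau$, take $y\to x$ with $y$ $\tau$-maximal and then $x$ $\tau$-maximal among the inverted children of $y$. This edge need not be covered in $\cg$. Take $\cg$ with edges $1\to 2,\ 1\to 3,\ 3\to 2$ and $\ch$ with edges $2\to 1,\ 3\to 1,\ 3\to 2$; both are acyclic orientations of a triangle, hence Markov equivalent. Here $3<_\tau 2<_\tau 1$, the inverted edges are $1\to 2$ and $1\to 3$, and your rule selects $y=1$, $x=2$. But $\pa_\cg(2)=\{1,3\}\neq\{1\}=\fa_\cg(1)$, so $1\to 2$ is not covered. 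Switching to ``$x$ $\tau$-minimal'' does not repair this either: keep the same $\cg$ but take $\ch$ with edges $2\to 1,\ 2\to 3,\ 3\to 1$, so that $2<_\tau 3<_\tau 1$; now the $\tau$-minimal inverted child of $1$ is $2$, and $1\to 2$ is again not covered. No uniform choice based on $\tau$ works, because a topological order of $\ch$ carries no information about which parent of $x$ in $\cg$ is ``closest'' to $x$.

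Chickering's actual rule uses a topological order $\pi$ of $\cg$: among reversed edges $a\to b$ in $\cg$, take $b$ $\pi$-minimal and then $a$ $\pi$-maximal. With that choice the case analysis you outline goes through, since acyclicity of $\cg$ (encoded by $\pi$) is precisely what rules out the offending configurations. Replace $\tau$ by $\pi$ and adjust the extremal criterion accordingly; the remainder of your sketch is then correct.
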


Furthermore, in the case of DAGs, a structural characterization of covariance/Markov equivalence is well-known.

\begin{definition}
The \emph{skeleton} of a directed graph $\cg$ is the set of its underlying undirected edges, with multiplicity. A \emph{v-structure} of $\cg$ is an induced subgraph of $\cg$ of the form $i \to k \from j$.    
\end{definition}

\begin{theorem}[\cite{verma2022equivalence}]
\label{thm:vermaPearlCharacterization}
Let $\cg,\ch$ be two DAGs on the vertex set $[n]$. Then $\cg$ and $\ch$ are covariance/Markov equivalent if and only if they have the same skeleton and v-structures.
\end{theorem}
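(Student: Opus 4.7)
The plan is to prove the two implications separately, using the fact that for DAGs covariance equivalence and Markov equivalence coincide (as remarked in the paper), and characterizing Markov equivalence through the set of induced $d$-separation statements.

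For the forward implication, suppose $\cg$ and $\ch$ encode the same $d$-separation statements. First I would show the skeletons agree: two vertices $i,j$ are adjacent in a DAG if and only if no subset $S \subseteq [n]\setminus\{i,j\}$ $d$-separates them, so adjacency is a property of the $d$-separation statements alone. Next I would show the v-structures agree: an induced triple $i - k - j$ on the common skeleton (with $i,j$ nonadjacent) forms a v-structure $i \to k \from j$ in $\cg$ if and only if every subset that $d$-separates $i$ from $j$ omits $k$; equivalently, $i$ and $j$ are $d$-connected given every $S$ with $k\in S$, but $d$-separated by some $S\not\ni k$. Since this is phrased purely in terms of $d$-separation, it must also hold in $\ch$, forcing the v-structures to coincide.

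For the backward implication, suppose $\cg$ and $\ch$ share both skeleton and v-structures. I would argue that every $d$-connecting path between $i$ and $j$ given $S$ in $\cg$ induces a $d$-connecting walk between $i$ and $j$ given $S$ in $\ch$ (and then shorten it to a path). The key observation is that along a path $\pi$ through the shared skeleton, a nonterminal vertex $v$ flanked by $u$ and $w$ is a collider in either DAG precisely when $u,w$ are nonadjacent and the triple $u-v-w$ is a v-structure; both the adjacency of $u,w$ and the v-structure status of the triple are intrinsic to the shared skeleton/v-structure data. Hence colliders and noncolliders along $\pi$ are the same in $\cg$ and $\ch$, so $\pi$ is blocked by $S$ in one graph if and only if it is blocked in the other. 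Symmetry then gives equality of $d$-separation sets, i.e.\ Markov equivalence.

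I expect the main obstacle to be the triple-by-triple comparison in the backward direction, which must handle with care the cases where $u$ and $w$ \emph{are} adjacent in the skeleton (so $u-v-w$ is never a v-structure, regardless of orientations) and where the path revisits the same edge, in order to legitimately produce a path from a walk. An alternative route that sidesteps the path bookkeeping is to invoke Chickering's transformational characterization stated just above: a covered edge flip manifestly preserves the skeleton and v-structures, so the $\Leftarrow$ direction reduces to showing that any two DAGs with the same skeleton and v-structures are connected by a sequence of covered edge flips. This reduction in turn rests on the completed partially directed acyclic graph being a complete invariant of the equivalence class (via Meek's orientation rules), which is essentially equivalent to Verma--Pearl and therefore not a true shortcut, only a repackaging of the same technical content.
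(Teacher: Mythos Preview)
The paper does not prove this theorem; it is stated with a citation to Verma and Pearl and no argument is given. So there is nothing to compare against, and your outline must stand on its own.

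Your forward direction is fine: adjacency and the v-structure status of an unshielded triple are both expressible purely in terms of the $d$-separation relation, exactly as you say.

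The backward direction, however, has a real gap, and it is precisely the one you flagged without resolving. Your key claim, that a nonterminal vertex $v$ on a path with neighbours $u,w$ is a collider in either DAG ``precisely when $u,w$ are nonadjacent and the triple $u-v-w$ is a v-structure,'' is false. A \emph{shielded} collider $u\to v\leftarrow w$ with $u,w$ adjacent is a collider on the path but is not a v-structure, and nothing in the shared skeleton/v-structure data fixes the orientation of that triple: in $\ch$ it may well be $u\to v\to w$. So the collider/noncollider pattern along a fixed skeleton path need \emph{not} agree in $\cg$ and $\ch$, and your conclusion that the same path is active in both graphs does not follow. This is exactly the substantive content of the Verma--Pearl argument; one standard fix is to take a \emph{shortest} $d$-connecting path and argue that any shielded collider $u\to v\leftarrow w$ can be shortcut via the $u$--$w$ edge, eventually producing an active path on which every collider is unshielded (hence a shared v-structure) while every noncollider is handled by an inductive detour argument. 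Another route is through moralization of ancestral sets. Either way, the ``handle with care'' case is the whole proof, not a technicality, and your outline does not yet contain the idea that closes it. Your proposed alternative via Chickering's theorem is, as you yourself note, circular.
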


In score-based causal discovery methods, one searches over a subset of the set of all directed graphs for a graph in the optimally scored equivalence class. 
In this search, one may encounter multiple graphs in the same equivalence class, and so the score for an equivalence class may be computed multiple times. Consequently, it is useful to study this equivalence relation and find unique representatives of each class.
The characterization of covariance equivalence in Theorem~\ref{thm:vermaPearlCharacterization} gives rise to a unique combinatorial representative of a covariance equivalence class of DAGs, called the \emph{essential graph}.

\begin{definition}
\label{def:essentialGraph}
The \emph{essential graph} of a DAG $\cg$ is the graph obtained from $\cg$ by removing the orientation of edges which are flipped in at least one Markov equivalent DAG to $\cg$.
\end{definition}

\begin{theorem}[\cite{andersson1997markov}]
Two DAGs $\cg$ and $\ch$ are covariance/Markov equivalent if and only if they have the same essential graph.    
\end{theorem}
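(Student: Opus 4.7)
The plan is to derive this from Theorem~\ref{thm:vermaPearlCharacterization} (the Verma--Pearl characterization via skeleton and v-structures) together with Definition~\ref{def:essentialGraph} of the essential graph. The forward direction is essentially tautological given the way the essential graph is defined, while the reverse direction reduces to showing that the directed edges in the essential graph force the v-structures of any DAG realizing it.

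For the forward direction, suppose $\cg$ and $\ch$ are Markov equivalent. By Definition~\ref{def:essentialGraph}, the essential graph of a DAG is determined only by its Markov equivalence class: an edge is undirected precisely if its orientation differs in some member of the class, and is directed with its common orientation otherwise. Since $\cg$ and $\ch$ belong to the same class, the set of undirected edges and the orientations of the directed edges agree, so their essential graphs coincide. (One also needs that the underlying skeletons agree, which follows from Theorem~\ref{thm:vermaPearlCharacterization}.)

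For the reverse direction, suppose $\cg$ and $\ch$ share the essential graph $E$. By Theorem~\ref{thm:vermaPearlCharacterization} it suffices to check that $\cg$ and $\ch$ have the same skeleton and v-structures. The skeleton is immediate: both $\cg$ and $\ch$ are obtained from $E$ by orienting its undirected edges, so their skeletons equal $\skel(E)$. To compare v-structures, take a v-structure $a \to c \from b$ in $\cg$. I claim the edges $a \to c$ and $b \to c$ are directed in $E$: if, say, $a \to c$ were undirected in $E$, then by Definition~\ref{def:essentialGraph} there would exist a DAG $\cg' \in \mec(\cg)$ containing $a \from c$; but by Theorem~\ref{thm:vermaPearlCharacterization} the pair $a, b$ remains non-adjacent in $\cg'$ while the v-structure at $c$ is destroyed, contradicting Theorem~\ref{thm:vermaPearlCharacterization} applied to $\cg$ and $\cg'$. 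Hence $a \to c$ and $b \to c$ are directed edges of $E$, so they appear with the same orientation in $\ch$, and because $\ch$ has the same skeleton, $a$ and $b$ remain non-adjacent in $\ch$. Thus $a \to c \from b$ is a v-structure of $\ch$, and by symmetry every v-structure of $\ch$ is a v-structure of $\cg$.

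There is essentially no hard step here: once one observes that v-structure edges cannot be reversed within a Markov equivalence class (the key content of Theorem~\ref{thm:vermaPearlCharacterization}), they are forced to be directed in the essential graph, and matching essential graphs then automatically yields matching v-structures. The most delicate point to articulate carefully is simply the logical relationship between ``the essential graph is a function of the Markov equivalence class'' (used in the forward direction) and ``matching essential graphs implies matching v-structures'' (used in the reverse direction); otherwise the argument is a short deduction from Theorem~\ref{thm:vermaPearlCharacterization}.
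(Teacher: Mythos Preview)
The paper does not prove this theorem; it is simply quoted from \cite{andersson1997markov} as background. Your argument is a correct self-contained derivation from Theorem~\ref{thm:vermaPearlCharacterization}: the forward direction follows because Definition~\ref{def:essentialGraph} manifestly depends only on the Markov equivalence class (same skeleton, and for each edge the set of orientations realized across the class is the same), and for the reverse direction your observation that the two edges of a v-structure can never be reversed within the class---hence are directed in the essential graph---is exactly what is needed to recover the v-structures from the essential graph and invoke Theorem~\ref{thm:vermaPearlCharacterization}. The original proof in \cite{andersson1997markov} is more involved because that paper also gives an intrinsic graph-theoretic characterization of which partially directed graphs arise as essential graphs, but for the bare equivalence stated here your short route via Verma--Pearl is entirely adequate.
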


\begin{example}
In \Cref{fig: Markov equivalent DAGs}, three covariance equivalent DAGs, related via covered edge flips, are illustrated. Note that these graphs have the same skeleton and v-structures, and any other configuration of edge orientations would destroy a v-structure, add a v-structure, or  create a directed cycle in the graph. Hence, the three DAGs in \Cref{fig: Markov equivalent DAGs} are the only DAGs in this covariance equivalence class, and its associated essential graph is the graph shown in \Cref{fig: essential graph}.
\end{example}

\begin{figure}[t!]
\centering
\begin{subfigure}[t]{0.7\textwidth}
\centering
\begin{tikzpicture}[>=Stealth, shorten >=0pt, shorten <=0pt]

\def\dx{1.5}
\def\sep{2.4} 

\begin{scope}[shift={(0,0)}]
\node[wB] (X1) at (0,\dx) {};
\node[wB] (X2) at (\dx,\dx) {};
\node[wB] (X3) at (\dx,0) {};
\node[wB] (X4) at (0,0) {};
\node at (-0.25,\dx) {1};
\node at (\dx+0.25,\dx) {2};
\node at (\dx+0.25,0) {3};
\node at (-0.25,0) {4};

\draw[->] (X2) -- (X1);
\draw[->] (X4) -- (X1);
\draw[->] (X3) -- (X1);
\draw[->] (X3) -- (X2);
\draw[->] (X4) -- (X3);
\end{scope}

\draw [ultra thick, -Stealth] 
  (\dx+0.5, \dx/2) --++ (\sep - 1, 0);

\begin{scope}[shift={(\sep + \dx,0)}]
\node[wB] (X1) at (0,\dx) {};
\node[wB] (X2) at (\dx,\dx) {};
\node[wB] (X3) at (\dx,0) {};
\node[wB] (X4) at (0,0) {};
\node at (-0.25,\dx) {1};
\node at (\dx+0.25,\dx) {2};
\node at (\dx+0.25,0) {3};
\node at (-0.25,0) {4};

\draw[->] (X2) -- (X1);
\draw[->] (X4) -- (X1);
\draw[->] (X3) -- (X1);
\draw[->] (X3) -- (X2);
\draw[->] (X3) -- (X4);
\end{scope}

\draw [ultra thick, -Stealth] 
  ({2*\sep + 3*\dx/2 - 1}, \dx/2) --++ (\sep - 1, 0);

\begin{scope}[shift={(2*\sep + 2*\dx,0)}]
\node[wB] (X1) at (0,\dx) {};
\node[wB] (X2) at (\dx,\dx) {};
\node[wB] (X3) at (\dx,0) {};
\node[wB] (X4) at (0,0) {};
\node at (-0.25,\dx) {1};
\node at (\dx+0.25,\dx) {2};
\node at (\dx+0.25,0) {3};
\node at (-0.25,0) {4};

\draw[->] (X2) -- (X1);
\draw[->] (X4) -- (X1);
\draw[->] (X3) -- (X1);
\draw[->] (X2) -- (X3);
\draw[->] (X3) -- (X4);
\end{scope}

\end{tikzpicture}
\caption{}
\label{fig: Markov equivalent DAGs}
\end{subfigure}%
    ~ 
\begin{subfigure}[t]{0.25\textwidth}
\centering
\begin{tikzpicture}[>=Stealth]

\def\dx{1.5}
\def\sep{2.4} 

\node[wB] (X1) at (0,\dx) {};
\node[wB] (X2) at (\dx,\dx) {};
\node[wB] (X3) at (\dx,0) {};
\node[wB] (X4) at (0,0) {};
\node at (-0.25,\dx) {1};
\node at (\dx+0.25,\dx) {2};
\node at (\dx+0.25,0) {3};
\node at (-0.25,0) {4};

\draw[->] (X2) -- (X1);
\draw[->] (X4) -- (X1);
\draw[->] (X3) -- (X1);
\draw[-] (X2) -- (X3);
\draw[-] (X4) -- (X3);

\end{tikzpicture}
\caption{}
\label{fig: essential graph}
\end{subfigure}
\caption{(A) Three covariance/Markov equivalent DAGs, which are related by flipping the covered edges $4 \to 3$ and then $3 \to 2$, (B) along with their corresponding  essential graph. }
\label{fig: equivalence in DAGs}
\end{figure}
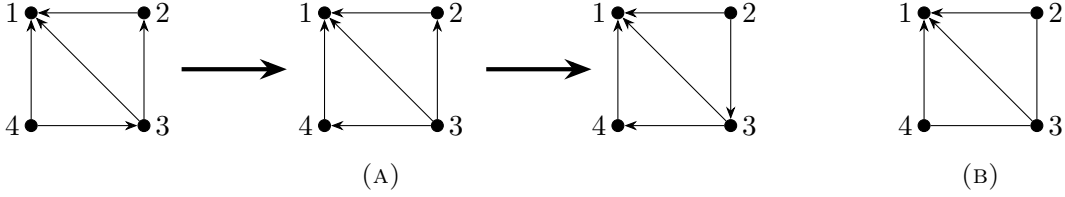

When directed cycles are allowed in the graphs, the theory is less well understood. As we saw in Example~\ref{ex:modelEquivalenceOfCycles}, graphs with different skeletons can be covariance equivalent. Therefore, it is difficult to give a structural characterization of this equivalence relation. In this paper, we present a sufficient structural condition for two graphs to be covariance equivalent which applies more broadly than the condition presented in~\cite{pmlr-v124-amendola20a}.

\subsection{Imsets and the Family Variable Vectors} Another representative of covariance equivalence for acyclic graphs arises in the form of characteristic imset vectors, which are 0/1 vectors that record structural features of graphs. In this section we review the literature on these objects.

\begin{definition}[Characteristic imset vectors]
\label{def:cim}
    Let $\mathcal{G} = ([n], E(\mathcal{G}))$ be a directed (possibly cyclic) graph. The \emph{characteristic imset vector} $c_\cg \in \mathbb{R}^{2^n-1}$ associated with $\cg$ is the vector indexed by nonempty sets $A\subseteq [n]$ defined by
    \begin{align*}
        \cimv_\mathcal{G}(A) \coloneqq
            \# \{a \in A : A\setminus \{a\} \subseteq \pa_\mathcal{G}(a) \},
    \end{align*}
    where for a set $S$, $\#S$ denotes the size of $S$.
\end{definition}

Including coordinates indexed by singleton sets is not necessary for distinguishing between graphs since for each $b \in [n]$ we have $c_\cg(b) = 1$. 
However, we will see in later sections that including these coordinates is relevant for our algebraic computations; see Remark~\ref{rem:reasonForSingletons}. Characteristic imsets characterize the covariance/Markov equivalence classes when restricted to DAGs.

\begin{theorem}[\cite{studeny2010characteristic}]
\label{thm: cimv and markov equivalence}
Let $\cg,\ch$ be two DAGs on $[n]$. Then $\cg$ and $\ch$ are covariance/Markov equivalent if and only if $c_\cg = c_\ch$.
\end{theorem}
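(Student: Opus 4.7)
The plan is to reduce to the Verma--Pearl structural characterization (\Cref{thm:vermaPearlCharacterization}), so that it suffices to prove $\cimv_\cg = \cimv_\ch$ if and only if $\cg$ and $\ch$ share the same skeleton and v-structures.

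For the backward direction I would show that both pieces of combinatorial data can be extracted from $\cimv_\cg$ alone. For a two-element set $A = \{i,j\}$, acyclicity forces at most one of $i \to j$, $j \to i$ to lie in $E(\cg)$, so $\cimv_\cg(\{i,j\}) \in \{0,1\}$ and equals $1$ exactly when $\{i,j\}$ is in the skeleton. For a three-element set $A = \{i,j,k\}$ whose skeleton restriction contains $\{i,k\}$ and $\{j,k\}$ but not $\{i,j\}$, acyclicity again forces the contributions from $a = i$ and $a = j$ to $\cimv_\cg(A)$ to vanish, so $\cimv_\cg(\{i,j,k\}) = 1$ exactly when $\{i,j\} \subseteq \pa_\cg(k)$, that is when $i \to k \from j$ is a v-structure. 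Hence equality of the characteristic imsets forces identical skeletons and v-structures, and Verma--Pearl supplies Markov equivalence.

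For the forward direction I would invoke Chickering's transformational characterization and prove that $\cimv$ is invariant under a single covered edge flip. If $\cg'$ is obtained from $\cg$ by reversing a covered edge $i \to j$, then $\pa_{\cg'}(j) = \pa_\cg(i)$, $\pa_{\cg'}(i) = \pa_\cg(i) \cup \{j\}$, and every other parent set is unchanged, so $\cimv_\cg(A)$ and $\cimv_{\cg'}(A)$ can only differ through the contributions of $a = i$ and $a = j$. A short case check handles the situations where at most one of $i,j$ lies in $A$. In the remaining case, the contribution from $a = j$ to $\cimv_\cg(A)$ equals the indicator $[A\setminus\{i,j\} \subseteq \pa_\cg(i)]$ while that from $a = i$ vanishes (since $j \in A\setminus\{i\}$ but $j \notin \pa_\cg(i)$); after the flip these roles swap, and the same indicator reappears at $a = i$ while the contribution of $a = j$ vanishes.

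The main obstacle is exactly this last case analysis: the sets $\pa_{\cg'}(i)$ and $\pa_{\cg'}(j)$ change in opposite directions under the flip, and one has to verify carefully that the gain at $a = i$ cancels the loss at $a = j$ whenever both endpoints of the flipped edge lie in $A$. Once this local invariance is in hand, the forward direction follows by induction on the length of the covered edge flip sequence guaranteed by Chickering's theorem, closing the equivalence.
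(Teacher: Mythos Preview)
The paper does not supply its own proof of this theorem; it is quoted from \cite{studeny2010characteristic} without argument. Your proposal is correct and is the standard proof.

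A small comparison with what the paper does contain: your forward direction (invariance of $\cimv$ under a covered edge flip) is exactly the statement that the vectors $e_{A \to b} + e_{A \cup b \to c} - e_{A \to c} - e_{A \cup c \to b}$ lie in $\ker \varphi_n$, which the paper records as part of \Cref{lemma:quotientMap} and verifies coordinate-by-coordinate in the $(\subseteq)$ half of the proof of \Cref{prop:LaurentIdeal}. Your case analysis is equivalent to that computation. The backward direction (recovering the skeleton from the size-$2$ coordinates and the v-structures from the size-$3$ coordinates, then invoking \Cref{thm:vermaPearlCharacterization}) is not argued anywhere in the paper but is correct as you outline it. One tiny wording issue: in the three-element case, the vanishing of the contributions from $a=i$ and $a=j$ comes from the absence of the edge $\{i,j\}$ in the skeleton, not from acyclicity per se.
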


In \cite{cussens2013maximum,jaakkola2010learning,ijcai2017p708} a different vector is studied, which uniquely characterizes each directed graph.

\begin{definition}[Family variable vectors]
\label{def:fvp}
Let $\mathcal{G} = ([n], E(\mathcal{G}))$ be a directed (possibly cyclic) graph. The \emph{family variable vector} $\fvv_\mathcal{G} \in \mathbb{R}^{n \cdot 2^{n-1}}$ associated with $\mathcal{G}$ is the vector indexed by families $A \to b$ with
$b \in [n]$ and $A \subseteq [n] \setminus \{b\}$, defined by
\begin{align*}
    \fvv_{\mathcal{G}}(A\to b) \coloneqq 
    \begin{cases}
        1 & \text{if } \pa_\mathcal{G}(b) = A, \\
        0 & \text{otherwise}.
    \end{cases}
\end{align*}
\end{definition}

Family variable vectors are in bijection with directed graphs since each vector records the families contained in the graph.
We have the following map between the space of family variable vectors and the space of characteristic imset vectors.

\begin{definition}
\label{def:mapBetweenPolytopes}
Define $\varphi_n:\mathbb{R}^{n \cdot 2^{n-1}} \to \mathbb{R}^{2^n-1}$ to be the linear map given by
\begin{align*}
(\varphi_n(u))(S) = \sum\limits_{b \in S} \sum\limits_{A: [n]\setminus \{b\} \supseteq A \supseteq S \setminus \{b\}} u(A \to b) \quad \text{for all nonempty set $S \subseteq [n]$}.
\end{align*}
\end{definition}

\begin{lemma}[Lemma~5, \cite{cussens2017scoreequiv}]
\label{lemma:quotientMap}
For all directed graphs $\cg$ on $[n]$, $\varphi_n(\fvv_\cg) = c_\cg$. Moreover, the kernel of $\varphi_n$ is the vector space generated by all vectors of the form $e_{A \to b} + e_{A \cup b \to c} - e_{A \to c} - e_{A \cup c \to b}$ where $b,c \in [n]$ are distinct,  $A \subseteq [n] \setminus \{b,c\}$, and $e_{A \to b}$ denotes the standard basis vector associated to the coordinate indexed by the family $A \to b$.
\end{lemma}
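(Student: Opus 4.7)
The statement has two parts, which I will handle in turn.

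\emph{Part 1: $\varphi_n(v_{\cg}) = c_{\cg}$.} This is a direct unfolding. By the definition of $\varphi_n$,
\[
(\varphi_n(v_\cg))(S) = \sum_{b \in S} \sum_{A \supseteq S\setminus\{b\},\ A \subseteq [n]\setminus\{b\}} v_\cg(A \to b).
\]
For each $b \in S$, the inner sum has a unique possibly nonzero term, corresponding to $A = \pa_\cg(b)$, and it equals $1$ precisely when $\pa_\cg(b) \supseteq S\setminus\{b\}$. Hence the outer sum counts $\#\{b\in S : S\setminus\{b\}\subseteq \pa_\cg(b)\} = c_\cg(S)$.

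\emph{Part 2a: $K \subseteq \ker \varphi_n$, where $K$ is the span of the generators.} I will show $\varphi_n(m_{A;b,c}) = 0$ for $m_{A;b,c} := e_{A\to b} + e_{A\cup b\to c} - e_{A\to c} - e_{A\cup c\to b}$. Fix a nonempty $S\subseteq [n]$ and case-split on whether $b$ and $c$ lie in $S$. If at most one of $b,c$ lies in $S$, the relevant terms pair up in two canceling indicators (using $c\notin S$ to rewrite $S\setminus\{b\}\subseteq A\cup c$ as $S\setminus\{b\}\subseteq A$, and symmetrically). If both $b,c \in S$, write $T = S\setminus\{b,c\}$; the terms involving $A$ alone vanish because $c\notin A$ (resp.\ $b\notin A$) prevents $S\setminus\{b\}\subseteq A$ (resp.\ $S\setminus\{c\}\subseteq A$), while the two terms involving $A\cup b$ and $A\cup c$ both reduce to the same condition $T\subseteq A$ with opposite signs. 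So every coordinate of $\varphi_n(m_{A;b,c})$ vanishes.

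\emph{Part 2b: $\ker\varphi_n \subseteq K$.} Since the containment goes one way, it suffices to show $\dim K = \dim \ker \varphi_n$. I will compute each side. For $\dim\ker\varphi_n$, it is enough to show $\varphi_n$ is surjective, and this follows from Möbius inversion on the Boolean lattice: for each $b \in [n]$, the restriction of $\varphi_n$ to coordinates $\{e_{A\to b}\}_{A\subseteq [n]\setminus\{b\}}$ is, up to re-indexing $A \leftrightarrow A\cup\{b\}$, the zeta map on $2^{[n]\setminus\{b\}}$, so every $e_S$ with $b\in S$ is in the image. Hence $\dim\ker\varphi_n = n\cdot 2^{n-1}-(2^n-1)$.

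For $\dim K$, I will reindex via the zeta maps $\zeta_b$ so that $e_{A\to b}$ becomes the vector $(w_{b'})_{b'}$ with $w_b(S) = \mathbbm{1}[b\in S,\ S\setminus b \subseteq A]$. In these coordinates, $m_{A;b,c}$ is supported on pairs $(b', S)$ with $\{b,c\}\subseteq S \subseteq A\cup\{b,c\}$, taking value $-1$ at $(b,S)$ and $+1$ at $(c,S)$. Möbius inversion over the interval $[\{b,c\},\,A\cup\{b,c\}]$ of the Boolean lattice yields, for each $S\supseteq \{b,c\}$ with $|S|\ge 2$, an "atomic" element
\[
\tilde m_{S;b,c} = \sum_{A \subseteq S\setminus\{b,c\}} (-1)^{|S\setminus\{b,c\}|-|A|}\, m_{A;b,c}\ \in K
\]
whose image in the zeta coordinates is concentrated at a single $S$, with $-1$ at $(b,S)$ and $+1$ at $(c,S)$. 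For each fixed $S$ with $|S|\ge 2$, as $b,c$ range over pairs in $S$ these atomic elements span the sum-zero subspace of $\mathbb{R}^S$, which has dimension $|S|-1$. Summing,
\[
\dim K \ge \sum_{k=2}^n \binom{n}{k}(k-1) = n\cdot 2^{n-1} - (2^n-1),
\]
matching $\dim \ker\varphi_n$ and giving equality.

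\emph{Expected main obstacle.} The routine part is Part~1 and the case-analysis in Part 2a. The substantive step is 2b, where the correct change of coordinates (the zeta maps $\zeta_b$ applied block-wise) is what makes both the kernel description and the "atomic" Möbius-inverted generators transparent; choosing a less adapted basis would force a direct combinatorial bookkeeping of dependencies among the generators $m_{A;b,c}$, which become numerous as $n$ grows.
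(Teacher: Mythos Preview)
Your proof is correct. Part~1 and Part~2a follow the same case analysis that the paper carries out (in the proof of Proposition~\ref{prop:LaurentIdeal}, which is where the paper actually establishes the kernel claim over~$\mathbb{Z}$).

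For Part~2b, however, you take a genuinely different route. The paper proves the reverse containment by a \emph{reduction} argument: given $v\in\ker\varphi_n$, it locates a nonzero coordinate $v(A\cup c\to b)$ with $b<c$ that is not ``topologically ordered,'' subtracts the appropriate multiple of a generator, and shows a nonnegative potential strictly decreases; after finitely many steps only topologically ordered coordinates survive, and these are killed by the upper-triangular independence of Proposition~\ref{prop:upperTriangular}. Your argument instead counts dimensions: you prove surjectivity of $\varphi_n$ via block-wise M\"obius inversion (a different proof of full rank than the triangular one in Proposition~\ref{prop:upperTriangular}), and then exhibit $\sum_{k\ge 2}\binom{n}{k}(k-1)$ independent elements of $K$ by M\"obius-inverting the generators down to ``atomic'' vectors $\tilde m_{S;b,c}$ supported at a single $S$. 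The paper's approach is constructive and yields the stronger $\mathbb{Z}$-lattice statement needed for the toric ideal; your approach is cleaner for the vector-space statement actually asserted in Lemma~\ref{lemma:quotientMap}, and makes the Boolean-lattice structure do all the work, but as written does not immediately give $\ker_{\mathbb{Z}}(\varphi_n)=\mathcal{L}_n$.
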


Note that the generators in Lemma~\ref{lemma:quotientMap} correspond to covered edge flips. We refer readers to Figure~\ref{fig:coveredEdgeFlip}; the left graph corresponds to the standard basis vectors with coefficient $1$ and the right graph corresponds to standard basis vectors with coefficient $-1$. We have the following result regarding the range of $\varphi_n$.

\begin{proposition}
\label{prop:upperTriangular}
Let $M \in \mathbb{Z}^{(2^n-1) \times (n \cdot 2^{n-1})}$ denote the matrix of the linear map $\varphi_n$. The columns of $M$ indexed by $A \to b$, where $\max(A) < b$, are linearly independent.
\end{proposition}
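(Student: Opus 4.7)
The plan is to exhibit an ordering of the nonempty subsets of $[n]$ under which the submatrix of $M$ consisting of the columns in question becomes upper triangular with ones on the diagonal. The point is that each allowed column is naturally in bijection with a single nonempty subset of $[n]$, so we will land on a square $(2^n-1)\times(2^n-1)$ matrix, and unit upper triangularity will then give linear independence for free.

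First I would unpack what a single column of $M$ looks like. By \Cref{def:mapBetweenPolytopes}, the coordinate $u(A\to b)$ contributes to the $S$-entry of $\varphi_n(u)$ exactly when $b\in S$ and $S\setminus\{b\}\subseteq A$, i.e.\ precisely when $b\in S\subseteq A\cup\{b\}$. Thus the column $M_{A\to b}$ is the indicator vector (indexed by nonempty $S\subseteq[n]$) of the collection
\[
\{S : b\in S\subseteq A\cup\{b\}\}.
\]

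Next I would use the constraint $\max(A)<b$ to reparameterize the columns. Under this constraint, $b=\max(A\cup\{b\})$, so the assignment $(A,b)\mapsto T:=A\cup\{b\}$ is a bijection between the allowed indices and the nonempty subsets $T\subseteq[n]$ (take $A=T\setminus\{\max T\}$, $b=\max T$). This yields exactly $2^n-1$ columns, matching the number of rows of $M$, so it is enough to prove that the resulting square matrix $N$ with entries
\[
N_{S,T}=\begin{cases}1 & \text{if } \max T\in S\subseteq T,\\ 0 & \text{otherwise,}\end{cases}
\]
is invertible.

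The key step, and really the whole content of the proof, is to order both rows and columns by increasing cardinality (with any tiebreaker within a given size). For the upper triangular claim, I need to verify that $N_{S,T}=0$ whenever $|S|>|T|$, or whenever $|S|=|T|$ and $S\ne T$. In the first case $S\not\subseteq T$ for cardinality reasons, so $N_{S,T}=0$. In the second case $S\subseteq T$ together with $|S|=|T|$ would force $S=T$, which is excluded, so again $N_{S,T}=0$. For the diagonal, $N_{T,T}=1$ since $\max T\in T\subseteq T$. Hence $N$ is unit upper triangular, in particular invertible, and the columns of $M$ indexed by $A\to b$ with $\max(A)<b$ are linearly independent.

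I do not anticipate a serious obstacle here; the only thing to be a bit careful about is the edge case $A=\emptyset$, which I handle by adopting the convention $\max(\emptyset)=-\infty$ so that the empty parent set is allowed for every $b\in[n]$ and corresponds to the singleton $T=\{b\}$.
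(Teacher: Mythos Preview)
Your proof is correct and follows essentially the same approach as the paper: both identify the bijection $(A,b)\mapsto A\cup\{b\}$ from the allowed columns to the nonempty subsets of $[n]$, order rows and columns compatibly (the paper uses any linear extension of the Boolean lattice, you use cardinality, which is such an extension), and conclude unit upper triangularity. Your write-up is in fact more explicit than the paper's about why the off-diagonal entries in a given cardinality block vanish.
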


\begin{proof}
Order the rows of $M$, indexed by $\emptyset \neq S \subseteq [n]$, in the order of a linear extension of the Boolean lattice, and order the columns of $M$, indexed by $A \to b$, in the same order with respect to the sets $A \cup \{b\}$. Note that for any $u\in \mathbb{R}^{n \cdot 2^{n-1}}$ and  any $\emptyset \neq S \subseteq [n]$,  
\begin{align*}
(\varphi_n(u))(S) = \sum\limits_{b \in S} \sum\limits_{ A \supseteq S \setminus b} u(A \to b).
\end{align*}
Therefore, given the specific ordering of the rows and columns of $M$, the submatrix of $M$ containing columns indexed by $A\to b$ with $\max(A)<b$, is upper triangular with 1s on the diagonal. In particular, this submatrix has full column rank.
\end{proof}

We conclude this section with a brief discussion of standard imsets.

\begin{definition}[Standard imset vectors]
\label{def:standardimsets}
  Let $\cg = ([n], E(\cg))$ be a directed (possibly cyclic) graph. For each set $A \subseteq [n]$ define a vector $\delta_A \in \mathbb{R}^{2^n}$ whose coordinates are indexed by sets $B \subseteq [n]$ such that
  \begin{align*}
      \delta_A(B) \coloneqq \begin{cases}
          1 & \text{if } B = A, \\
          0 & \text{otherwise}.
      \end{cases}
  \end{align*}

  The \emph{standard imset vector} $\simv_\mathcal{G} \in \mathbb{R}^{2^n}$ associated with $\cg$ is the vector indexed by sets $A\subseteq [n]$, defined by 
  \begin{align*}
      \simv_\cg \coloneqq \sum_{i \in [n]} (\delta_{\fa_\cg(i)} - \delta_{\pa_\cg(i)}).
  \end{align*}
The definition of the standard imset yields a linear map $\psi_n:\mathbb{R}^{n \cdot 2^{n-1}} \to \mathbb{R}^{2^n}$ where $\psi_n(e_{A \to b}) = \delta_{A \cup \{b\}} - \delta_A$, so that $\psi_n(\fvv_\cg) = s_\cg$ for all $\cg$.
\end{definition}

Our definition differs slightly from the usual convention; see \cite{studeny2005book}. Note the sparsity in standard imsets -- only linearly many coordinates in $n$ are nonzero. Additionally, the form of the standard imset reflects the recursive factorization property for distributions that are Markov to a DAG after rewriting conditional densities as quotients of marginal densities. Together, these two properties make standard imsets more practical in applied settings than characteristic imsets. 

In the acyclic setting, it is easy to see that standard imsets form unique representatives for Markov equivalence classes since they mimic the recursive factorization written in terms of marginal densities. However, in the cyclic setting the recursive factorization yields a product of kernels, not densities, and the relationship is less clear.

Standard imsets and characteristic imsets are related via the following linear isomorphism. 

\begin{proposition}[\cite{studeny2010characteristic}]
\label{prop: standard imset}
    For all directed graphs $\cg$ and all nonempty subsets $A\subseteq [n]$, 
    \begin{align*}
        \cimv_\cg (A) = \sum_{B : A \subseteq B \subseteq [n]} \simv_\cg(B).
    \end{align*}
    Hence, using the M\"{o}bius Inversion formula, for all nonempty subsets $B\subseteq [n]$,
    \begin{align*}
        \simv_\cg (B) = \sum_{A : B \subseteq A \subseteq [n]} (-1)^{|A \setminus B|} \cimv_\cg(A) 
    \end{align*}
    and the coordinate associated to the empty set is obtained from the equation $\sum\limits_{A \subseteq [n]} s_\cg(A) = 0$.
\end{proposition}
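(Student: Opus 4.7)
The plan is to prove the first identity by a direct unfolding of the definitions of $\simv_\cg$ and $\cimv_\cg$, swapping the order of summation, and doing a small case analysis on whether $i \in A$. Concretely, I would start by writing
\[
\sum_{B : A \subseteq B \subseteq [n]} \simv_\cg(B) = \sum_{i \in [n]} \left( \mathbbm{1}\!\left[\fa_\cg(i) \supseteq A\right] - \mathbbm{1}\!\left[\pa_\cg(i) \supseteq A\right] \right),
\]
which comes from collapsing the sum $\sum_{B \supseteq A} \delta_{\fa_\cg(i)}(B)$ into $\mathbbm{1}[\fa_\cg(i) \supseteq A]$, and similarly for $\pa_\cg(i)$.

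Next I would split the sum on $i$ into the contributions from $i \in A$ and $i \notin A$. If $i \notin A$, then $\fa_\cg(i) \supseteq A$ is equivalent to $\pa_\cg(i) \supseteq A$ because $i$ is not in $A$, so these two indicators cancel and the $i$-th term is zero. If $i \in A$, then the condition $\pa_\cg(i) \supseteq A$ forces $i \in \pa_\cg(i)$, which is impossible since the graphs are loopless, so that indicator vanishes; meanwhile $\fa_\cg(i) \supseteq A$ is equivalent to $A \setminus \{i\} \subseteq \pa_\cg(i)$. Thus the sum reduces to $\#\{i \in A : A \setminus \{i\} \subseteq \pa_\cg(i)\}$, which is exactly $\cimv_\cg(A)$ by \Cref{def:cim}.

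For the second formula I would simply invoke the M\"obius inversion formula on the Boolean lattice $2^{[n]}$: the first identity expresses $\cimv_\cg$ as the zeta transform of $\simv_\cg$ restricted to nonempty subsets, so the stated alternating-sign expression for $\simv_\cg(B)$ on nonempty $B$ follows immediately from the classical inversion over an order-filter of the Boolean lattice. Finally, for the empty-set coordinate I would evaluate $\sum_{A \subseteq [n]} \simv_\cg(A)$ directly from \Cref{def:standardimsets}: since $\simv_\cg = \sum_{i \in [n]} (\delta_{\fa_\cg(i)} - \delta_{\pa_\cg(i)})$ and each $\delta_S$ has total mass $1$, the sum telescopes to $\sum_{i \in [n]}(1-1) = 0$, giving the claimed equation for $\simv_\cg(\emptyset)$.

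I do not anticipate any real obstacle here; the entire argument is a short double-counting/indicator manipulation combined with M\"obius inversion, and the only subtlety is remembering the no-self-loop assumption, which is what kills the $\pa_\cg(i) \supseteq A$ term in the case $i \in A$ and ensures that the identity matches $\cimv_\cg$ exactly without a spurious correction.
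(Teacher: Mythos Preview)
Your proposal is correct and follows essentially the same approach as the paper: both unfold $\sum_{B\supseteq A}\simv_\cg(B)$ into the difference of indicators $\ind[\fa_\cg(i)\supseteq A]-\ind[\pa_\cg(i)\supseteq A]$, observe that the $\pa$-term is absorbed by the $\fa$-term, and identify the surviving contribution as $\#\{i\in A:\fa_\cg(i)\supseteq A\}=\cimv_\cg(A)$, with the second identity left to M\"obius inversion. Your case split on $i\in A$ versus $i\notin A$ and explicit verification of the empty-set coordinate are a touch more detailed than the paper's write-up, but the argument is the same.
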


We provide a proof for completeness.

\begin{proof}
It suffices to prove the first equation in Proposition~\ref{prop: standard imset} since the second equation follows directly via M\"{o}bius inversion. For all nonempty subsets $A \subseteq [n]$,
\begin{align*}
\sum\limits_{B \supseteq A} s_\cg(B) = \sum\limits_{\substack{a \in [n] \\ \fa_\cg(a) \supseteq A}} 1 - \sum\limits_{\substack{a \in [n] \\ \pa_\cg(a) \supseteq A}} 1. 
\end{align*}
Clearly if $A \subseteq \pa_\cg(a)$, then $A \subseteq \fa_\cg(a)$, and so all of the terms in the second sum cancel with some term in the first. What remains is
\begin{align*} 
\sum\limits_{B \supseteq A} s_\cg(B) = \#\left\{ a \in A : \fa_\cg(a) \supseteq A \right\}
= c_\cg(A).
\end{align*}
\end{proof}

\begin{example}
Let $\cg$ and $\ch$ be the two directed graphs on $n=4$ vertices, illustrated in \Cref{fig: covariance equivalent with the same skeleton}. Then $\fvv_\cg(A\to b) = 1$ only if $A\to b\in \{4 \to 1, 1 \to 2, 12 \to 3, 23\to 4 \}$, and $\fvv_\ch(A\to b) = 1$ only if $A\to b \in \{23 \to 1, 34\to 2, 4 \to 3, 1\to 4\}$. These vectors encode exactly the families of each graph, and so $\fvv_\cg \neq \fvv_\ch$. However, their characteristic imset and standard imset vectors are the same:
\begin{align*}
&\cimv_\cg^T = \cimv_\ch^T = \bordermatrix{ 
& 1 & 2 & 3 & 4 & 12 & 13 & 14 & 23 & 24 & 34 & 123 & 134 & 124 & 234 & 1234 \cr 
& 1 & 1 & 1 & 1 & 1 & 1 & 1 & 1 & 1 & 1 & 1 & 0 & 0 & 1 & 0
},\\
&\simv_\cg^T = \simv_\ch ^T=
\bordermatrix{ 
& \emptyset & 1 & 2 & 3 & 4 & 12 & 13 & 14 & 23 & 24 & 34 & 123 & 134 & 124 & 234 & 1234 \cr 
&0 & -1 & 0 & 0 & -1 & 0 & 0 & 1 & -1 & 0 & 0 & 1 & 0 & 0 & 1 & 0
}.
\end{align*}

\begin{figure}
\centering
\begin{tikzpicture}[>=Stealth, shorten >=0pt, shorten <=0pt]

\def\dx{1.5}
\def\sep{2.4} 

\begin{scope}[shift={(0,0)}]
\node[wB] (X1) at (0,\dx) {};
\node[wB] (X2) at (\dx,\dx) {};
\node[wB] (X3) at (\dx,0) {};
\node[wB] (X4) at (0,0) {};
\node at (-0.25,\dx) {1};
\node at (\dx+0.25,\dx) {2};
\node at (\dx+0.25,0) {3};
\node at (-0.25,0) {4};

\draw[->] (X1) -- (X2);
\draw[->] (X2) -- (X3);
\draw[->] (X3) -- (X4);
\draw[->] (X4) -- (X1);
\draw[->] (X1) -- (X3);
\draw[->] (X2) -- (X4);
\end{scope}

\draw [ultra thick, -Stealth] 
  (\dx+0.5, \dx/2) --++ (\sep - 1, 0);

\begin{scope}[shift={(\sep + \dx,0)}]
\node[wB] (X1) at (0,\dx) {};
\node[wB] (X2) at (\dx,\dx) {};
\node[wB] (X3) at (\dx,0) {};
\node[wB] (X4) at (0,0) {};
\node at (-0.25,\dx) {1};
\node at (\dx+0.25,\dx) {2};
\node at (\dx+0.25,0) {3};
\node at (-0.25,0) {4};

\draw[->, bend right=15] (X1) to (X2);
\draw[->, bend right=15] (X2) to (X1);

\draw[->, bend right=15] (X2) to (X3);
\draw[->, bend right=15] (X3) to (X2);

\draw[->] (X4) -- (X3);
\draw[->] (X1) -- (X4);
\end{scope}

\draw [ultra thick, -Stealth] 
  ({2*\sep + 3*\dx/2 - 1}, \dx/2) --++ (\sep - 1, 0);

\begin{scope}[shift={(2*\sep + 2*\dx,0)}]
\node[wB] (X1) at (0,\dx) {};
\node[wB] (X2) at (\dx,\dx) {};
\node[wB] (X3) at (\dx,0) {};
\node[wB] (X4) at (0,0) {};
\node at (-0.25,\dx) {1};
\node at (\dx+0.25,\dx) {2};
\node at (\dx+0.25,0) {3};
\node at (-0.25,0) {4};

\draw[->, bend right=15] (X2) to (X3);
\draw[->, bend right=15] (X3) to (X2);

\draw[->] (X2) -- (X1);
\draw[->] (X1) -- (X4);
\draw[->] (X4) -- (X3);
\draw[->] (X3) -- (X1);
\end{scope}

\draw [ultra thick, -Stealth] 
  ({3*\sep + 5*\dx/2 - 1}, \dx/2) --++ (\sep - 1, 0);

\begin{scope}[shift={(3*\sep + 3 * \dx,0)}]
\node[wB] (X1) at (0,\dx) {};
\node[wB] (X2) at (\dx,\dx) {};
\node[wB] (X3) at (\dx,0) {};
\node[wB] (X4) at (0,0) {};
\node at (-0.25,\dx) {1};
\node at (\dx+0.25,\dx) {2};
\node at (\dx+0.25,0) {3};
\node at (-0.25,0) {4};

\draw[->] (X2) -- (X1);
\draw[->] (X1) -- (X4);
\draw[->] (X4) -- (X3);
\draw[->] (X3) -- (X1);
\draw[->] (X4) -- (X2);
\draw[->] (X3) -- (X2);
\end{scope}
\end{tikzpicture}

    \caption{The leftmost graph $\cg$ and the rightmost graph $\ch$ are covariance equivalent. They are related by reversing the cycle $1 \to 2 \to 3 \to 4 \to 1$, then exchanging the parents of nodes $1,2$, and finally exchanging the parents of nodes $2,3$. These operations are done in the sense of~\cite{pmlr-v119-ghassami20a}, and result in the above graphs respectively.}
    \label{fig: covariance equivalent with the same skeleton}
\end{figure}
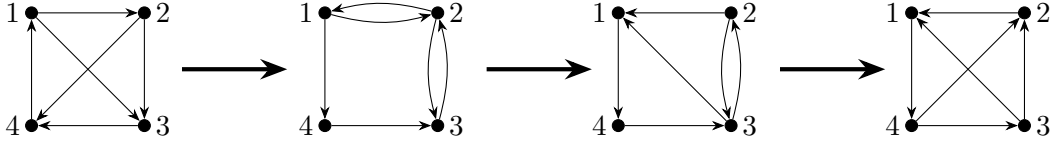

\end{example}

\section{The Chickering Ideal}
\label{sec:chickeringIdeal}

In this section we define a toric ideal we call the \emph{Chickering ideal} and demonstrate the difficulties of studying this ideal through its generators. By creating duplicates of variables, we form a binomial \emph{doubled Chickering ideal}, whose primary decomposition and variety we use to study the Chickering ideal. Throughout we fix a positive integer $n$ and associate to each family $A \to b$ with $b \in [n]$ and $A \subseteq [n] \setminus \{b\}$ two indeterminants $y_{A \to b}$, which we always take to be invertible, and $z_{A \to b}$, which is always non-invertible. Associated to each set $A \subseteq [n]$ we have an invertible  indeterminant $t_A$. We have the following toric ideal.

\begin{definition}
\label{def:ChickeringIdeal}
The \emph{Chickering ideal} $\chick_n$ is the kernel of the following two maps:

\begin{center}
    \begin{minipage}{0.45\linewidth}
    \begin{align*}
    \cc[z] &\to \cc[t_A^\pm: \emptyset \neq A \subseteq [n]] \\
    z^u &\mapsto t^{\varphi_n(u)}
    \end{align*}
    \end{minipage}
    \hfill
    \begin{minipage}{0.45\linewidth}
    \begin{align*}
    \cc[z] &\to \cc[t_A^\pm: A \subseteq [n]] \\
    z^u &\mapsto t^{\psi_n(u)}
    \end{align*}
    \end{minipage}
\end{center}
These maps have the same kernel by Proposition~\ref{prop: standard imset}. Associated to each directed graph $\cg$ we have a \emph{graph monomial} $z_\cg \coloneqq z^{\fvv_\cg}$.
\end{definition}

\begin{remark}
\label{rem:reasonForSingletons}
For each $\alpha \in \mathbb{Z}_{\geq 0}^{2^n-1}$, the fiber $z^{\varphi_n^{-1}(\alpha)}$ either consists only of graph monomials or contains no graph monomials. A fiber contains graph monomials if and only if $\alpha_b = 1$ for all $b \in [n]$, so that each node has exactly one family. Consequently when we include singleton sets in the definition of characteristic imsets, the fibers separate the family variable vectors representing directed graphs from those which do not represent graphs and partition the set of directed graphs into \emph{imset equivalence classes}.
\end{remark}

\begin{definition}
\label{def:CoveredEdgeFlipBinomial}
Let $b \neq c \in [n]$ and let $A \subseteq [n] \setminus \{b,c\}$. The \emph{covered edge flip binomial} associated to $A,b,c$ is $z_{A \to b} z_{A \cup b \to c} - z_{A \to c}z_{A \cup c \to b}$.
\end{definition}

Covered edge flips make a local change to a graph by reversing the orientation of one edge, which only changes the families of two vertices in the graph. The transformation of two families is reflected in the quadratic covered edge flip binomial; see Figure~\ref{fig:coveredEdgeFlip} to visualize this. Covered edge flip binomials enjoy the following special relationship with the Chickering ideal.

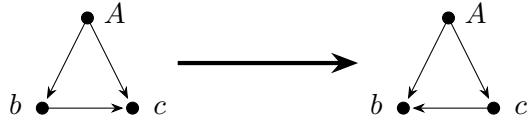
\begin{figure}
    \centering
\begin{tikzpicture}[scale = 1.2]
\begin{scope}[shift={(0,0)}]
\node[wB] at (0,0) {};
\node[wB] at (1,0) {};
\node[wB] at (0.5,1) {};

\node at (0.8,1.05) {$A$};
\node at (-0.3,0.05) {$b$};
\node at (1.3,0) {$c$};

\draw [-Stealth] (0.5,1)--(0.05,0.1);
\draw [-Stealth] (0.5,1)--(0.95,0.1);
\draw [-Stealth] (0,0)--(0.9,0);
\end{scope}

\draw[ultra thick, -Stealth] (1.5,0.5)--(3.5,0.5);

\begin{scope}[shift={(4,0)}]
\node[wB] at (0,0) {};
\node[wB] at (1,0) {};
\node[wB] at (0.5,1) {};

\node at (0.8,1.05) {$A$};
\node at (-0.3,0.05) {$b$};
\node at (1.3,0) {$c$};

\draw [-Stealth] (0.5,1)--(0.05,0.1);
\draw [-Stealth] (0.5,1)--(0.95,0.1);
\draw [-Stealth] (1,0)--(0.1,0);
\end{scope}

\end{tikzpicture}
    \caption{The binomial $z_{A \to b} z_{A \cup b \to c} - z_{A \to c}z_{A \cup c \to b}$ corresponds to this visual transformation of the graph.}
    \label{fig:coveredEdgeFlip}
\end{figure}

\begin{proposition}
\label{prop:LaurentIdeal}
For all $n\in \mathbb{N}$,
\begin{align*}\chick_n = \Big\langle z_{A \to b}z_{A \cup b \to c} - z_{A \to c} z_{A \cup c \to b} : b \neq c \in [n], A \subseteq [n] \setminus \{b,c\} \Big\rangle : \left(\prod\limits_{A \to b} z_{A \to b}\right)^\infty.\end{align*}
\end{proposition}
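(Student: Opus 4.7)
The plan is to prove both containments. The direction $J : (\prod z)^\infty \subseteq \chick_n$, where $J$ is the ideal generated by the covered edge flip binomials, is essentially formal: each flip vector lies in $\ker(\varphi_n)$ by \Cref{lemma:quotientMap}, so $J \subseteq \chick_n$; moreover, $\chick_n$ is prime and contains no $z_{A \to b}$ (as each maps to a nonzero monomial), whence $\chick_n : (\prod z)^\infty = \chick_n$.

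The reverse containment $\chick_n \subseteq J : (\prod z)^\infty$ is the real work. I would reduce it to a lattice-theoretic statement: by standard toric-ideal theory, this containment holds once the flip vectors generate $\ker(\varphi_n) \cap \zz^{n \cdot 2^{n-1}}$ as a $\zz$-lattice. (Rational spanning from \Cref{lemma:quotientMap} is not enough: simple examples show that saturating the ideal generated by binomials whose exponent vectors only $\mathbb{Q}$-span the kernel can leave a proper subideal of the toric ideal, so the upgrade to integer lattice generation is essential.) For the lattice statement I would lean on \Cref{prop:upperTriangular}: after the stated ordering the DAG-family submatrix of $M$ is upper triangular with $1$'s on the diagonal, hence unimodular. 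Each cyclic family $A \to b$ (with $\max(A) \geq b$) therefore produces a unique $w_{A \to b} = e_{A \to b} - (\text{integer combination of DAG indicator vectors}) \in \ker(\varphi_n) \cap \zz^{n \cdot 2^{n-1}}$, and these form a $\zz$-basis of the integer kernel.

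The remaining task is to express each $w_{A \to b}$ as an integer combination of flip vectors, which I would do by inducting on $k = |A \cap \{b+1, \ldots, n\}|$. Setting $c = \max(A)$ and $A' = A \setminus \{c\}$, the flip $v_{A', b, c}$ involves $A \to b$ together with $A' \to b$, $A' \cup \{b\} \to c$, and $A' \to c$; the latter two are automatically DAG, since every element of $A'$ is strictly less than $c$ and $b < c$. Projecting the identity
\begin{equation*}
v_{A', b, c} = e_{A' \to b} + e_{A' \cup \{b\} \to c} - e_{A' \to c} - e_{A \to b}
\end{equation*}
onto the basis $\{w_{X \to Y}\}$ (with the convention $w_{X \to Y} := 0$ for a DAG family $X \to Y$) yields the recursion $w_{A \to b} = w_{A' \to b} - v_{A', b, c}$. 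The base case $k = 1$ forces $A' \subseteq [b-1]$, making $A' \to b$ a DAG family and giving $w_{A \to b} = -v_{A', b, c}$ outright; for $k \geq 2$ we have $|A' \cap \{b+1, \ldots, n\}| = k - 1$, invoking the inductive hypothesis. The main obstacle is precisely this leap from $\mathbb{Q}$-spanning to $\zz$-lattice generation: once integer generation is secured, standard Laurent manipulations close the argument routinely. The combinatorial hook that makes the induction go through is that peeling off $c = \max(A)$ via a covered edge flip introduces only DAG accompanying families, keeping the recursion tight and the coefficients integral.
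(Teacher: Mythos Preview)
Your proof is correct and follows essentially the same strategy as the paper: both reduce the statement (via the lattice--ideal correspondence of Eisenbud--Sturmfels) to showing that the flip vectors generate $\ker_{\zz}(\varphi_n)$, and both exploit the unimodularity of the DAG-family submatrix from \Cref{prop:upperTriangular} together with flip moves that peel off the largest parent $c=\max(A)$ to clear non-DAG coordinates. The only cosmetic difference is in bookkeeping: the paper reduces an arbitrary kernel element via a potential-function argument, whereas you first extract an explicit $\zz$-basis $\{w_{A\to b}\}$ and then induct cleanly on $k=|A\cap\{b+1,\dots,n\}|$; your recursion $w_{A\to b}=w_{A'\to b}-v_{A',b,c}$ is arguably a tidier packaging of the same reduction.
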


\begin{proof}
Let $e_{A \to b}$ denote the standard basis vector associated to $A \to b$ in $\mathbb{R}^{n \cdot 2^{n-1}}$ and let $\cl_n \subseteq \mathbb{Z}^{n \cdot 2^{n-1}}$ be the lattice spanned by all vectors of the form $e_{A \to b} + e_{A \cup b \to c} - e_{A \to c} - e_{A \cup c \to b}$. By \cite[Theorem~2.1]{eisenbudsturmfels}, to prove the result it suffices to show that $\cl_n = \ker_{\mathbb{Z}}(\varphi_n)$.

$(\subseteq)$ Let $S \subseteq [n]$ and let $\ind_{q}$ denote the indicator function for the event $q$. Then
\begin{align*}
&\varphi_n(e_{A \to b} + e_{A \cup b \to c} - e_{A \to c} - e_{A \cup c \to b})(S) \\
&= \ind_{b \in S} \cdot \ind_{S \subseteq A \cup b} + \ind_{c \in S} \cdot \ind_{S \subseteq A \cup b \cup c} - \ind_{c \in S} \cdot \ind_{S \subseteq A \cup c} - \ind_{b \in S} \cdot \ind_{S \subseteq A \cup b \cup c}.
\end{align*}
If $b,c \not\in S$, then clearly the expression evaluates to 0. If $b \in S$ and $c \not\in S$, then after simplification we obtain
\begin{align*}\ind_{S \subseteq A \cup b} - \ind_{S \subseteq A \cup b \cup c},\end{align*}
which is zero since $c \not\in S$. The case of $b \not\in S$ and $c \in S$ is similar. Lastly we consider the case where $b,c \in S$. In this case we obtain
\begin{align*} \ind_{S \subseteq A \cup b} + \ind_{S \subseteq A \cup b \cup c} - \ind_{S \subseteq A \cup c} - \ind_{S \subseteq A \cup b \cup c} = \ind_{S \subseteq A \cup b} - \ind_{S \subseteq A \cup c} = 0.\end{align*}
In all cases, $\varphi_n(e_{A \to b} + e_{A \cup b \to c} - e_{A \to c} - e_{A \cup c \to b})(S)=0$ and so $e_{A \to b} + e_{A \cup b \to c} - e_{A \to c} - e_{A \cup c \to b} \in \ker_{\mathbb{Z}}(\varphi_n)$. Hence $\cl_n \subseteq \ker_{\mathbb{Z}}(\varphi_n)$.

$(\supseteq)$ Let $v \in \ker_\mathbb{Z}(\varphi_n)$. First suppose that $v(A \cup c \to b) \neq 0$ for some $b,c \in [n]$ and $A \subseteq [n] \setminus \{b,c\}$ with $b < c$. Without loss of generality we may assume that $a < c$ for all $a \in A$. Let
\begin{align*}v' = v - v(A \cup c \to b)\left( e_{A \to c} + e_{A \cup c \to b} - e_{A \to b} - e_{A \cup b \to c}\right).\end{align*}
Then consider
\begin{align*}
S^\prime \coloneqq \sum\limits_{\substack{F \to g \\ \max(F) > g}} \#F \cdot \ind_{v^\prime(F \to g) \neq 0} \quad \mbox{and} \quad S \coloneqq \sum\limits_{\substack{F \to g \\ \max(F) > g}} \#F \cdot \ind_{v(F \to g) \neq 0}.
\end{align*}
We know that $v'$ and $v$ differ only in the $A \to b$, $A \to c$, $A \cup b \to c$, and $A \cup c \to b$ coordinates. Our sums $S^\prime$ and $S$ do not involve the $A \to c$ or $A \cup b \to c$ coordinates since these coordinates are topologically ordered. For $A \cup c \to b$, we know that $v(A \cup c \to b)\neq 0$ and $v^\prime(A \cup c \to b) = 0$, and so the sums $S^\prime$ and $S$ differ by $\#A+1$ in that summand. Lastly, we consider the summand associated to $A \to b$. This family corresponds to a summand only when $\max(A) > b$ and in that case the two sums differ by $\#A \cdot \left( \ind_{v(A \to b) \neq 0} - \ind_{v^\prime(A \to b) \neq 0} \right)$. So
\begin{align*}
S^\prime = S - (\#A +1) - (\#A) \cdot \ind_{\max(A) > b} \cdot (\ind_{v(A \to b) \neq 0} - \ind_{v^\prime(A \to b) \neq 0}) < S.\\
\end{align*}
Consequently we have found a nonnegative quantity that decreases when we modify $v$ to form $v'$. Proceeding inductively, we obtain a kernel element which only has topologically ordered coordinates nonzero after finitely many steps. Now assume $v \in \ker_\mathbb{Z}(\varphi_n)$ has all nonzero coordinates topologically ordered. By Proposition~\ref{prop:upperTriangular} we know that $v = 0$, and so $\ker_{\mathbb{Z}}(\varphi_n) \subseteq \cl_n$.
\end{proof}

\begin{example}
The following are the generators of $\chick_3$.

\begin{center}
\begin{tabular}{ c c c }
$z_{\emptyset \to 1} z_{1 \to 2} - z_{\emptyset \to 2} z_{2 \to 1}$, 
& $z_{\emptyset \to 1} z_{1 \to 3} - z_{\emptyset \to 3} z_{3 \to 1}$, 
& $z_{\emptyset \to 2} z_{2 \to 3} - z_{\emptyset \to 3} z_{3 \to 2}$, \\
$z_{3 \to 1} z_{13 \to 2} - z_{3 \to 2} z_{23 \to 1}$, 
& $z_{2 \to 1} z_{12 \to 3} - z_{2 \to 3} z_{23 \to 1}$, 
& $z_{1 \to 2} z_{12 \to 3} - z_{1 \to 3} z_{13 \to 2}$,  
\end{tabular}
\begin{tabular}{ c c }
$z_{1 \to 2}z_{2 \to 3}z_{3 \to 1} - z_{2 \to 1}z_{3 \to 2}z_{1 \to 3}$, & $z_{\emptyset \to 1}z_{3 \to 2}z_{1 \to 3} - z_{\emptyset \to 2}z_{2 \to 3}z_{3 \to 1}$,
\end{tabular}
\begin{tabular}{ c }
$z_{3 \to 1}z_{13 \to 2}z_{2 \to 3} - z_{2 \to 1}z_{3 \to 2}z_{12 \to 3}$.
\end{tabular}
\end{center}
The first two rows consist of the six covered edge flip binomials. The first binomial in the third row reverses the cycle $1 \to 2 \to 3 \to 1$ and this binomial cannot be expressed in terms of covered edge flips. Consequently we see that $\chick_3$ is not generated by covered edge flip binomials and moreover is not quadratically generated. The other two binomials also have graphical interpretations.
\end{example}

\begin{remark}
In general, $\chick_n$ has minimal generators in every degree $2,\dots,n$. Each covered edge flip binomial is a degree 2 minimal generator of $\chick_n$. For every $k \geq 3$ and $(i_1,\dots,i_k)$ a sequence of distinct elements of $[n]$, we obtain a minimal generator
\begin{align*}z_{i_1 \to i_2}z_{i_2 \to i_3} \cdots z_{i_{k-1} \to i_k} z_{i_k \to i_1} - z_{i_1 \to i_k}z_{i_k \to i_{k-1}} \cdots z_{i_3 \to i_2} z_{i_2 \to i_1}\end{align*}
since the associated fiber of $\varphi_n$ has size 2. These binomials arise from cycle reversals. More complicated minimal generators also exist; for instance in $\chick_5$ we have
\begin{align*}z_{24 \to 1}z_{35 \to 2}z_{15 \to 3}z_{25  \to 4}z_{14 \to 5} - z_{35 \to 1}z_{14 \to 2}z_{25 \to 3}z_{15 \to 4}z_{24 \to 5}.\end{align*}
The graphs associated to this minimal generator are depicted in Figure~\ref{fig:minimalGenerator}.

\begin{figure}
    \centering
\begin{tikzpicture}[scale=1.5]

\begin{scope}[shift={(0,0)}]
\node at (0,1.2) {\small 1 \normalsize};
\node at (-1.141,0.371) {\small 2 \normalsize};
\node at (-0.705,-0.971) {\small 3 \normalsize};
\node at (0.705,-0.971) {\small 4 \normalsize};
\node at (1.141,0.371) {\small 5 \normalsize};

\draw[-Stealth, shorten >=3pt,black] (-0.951,0.309) to (0,1);
\draw[-Stealth, shorten >=3pt,black] (0.588,-0.809) to (0,1);

\draw[-Stealth, shorten >=3pt,black] (-0.588,-0.809) to (-0.951,0.309);
\draw[-Stealth, shorten >=3pt,black] (0.951,0.309) to (-0.951,0.309);

\draw[-Stealth, shorten >=3pt,black] (0,1) to (-0.588,-0.809);
\draw[-Stealth, shorten >=3pt,red,thick] (0.951,0.309) to (-0.588,-0.809);

\draw[-Stealth, shorten >=3pt,black] (-0.951,0.309) to (0.588,-0.809);
\draw[-Stealth, bend left=10, shorten >=3pt,black] (0.951,0.309) to (0.588,-0.809);

\draw[-Stealth, shorten >=3pt,black] (0,1) to (0.951,0.309);
\draw[-Stealth, bend left=10, shorten >=3pt,black] (0.588,-0.809) to (0.951,0.309);

\node[wB] at (0,1) {};
\node[wB] at (-0.951,0.309) {};
\node[wB] at (-0.588,-0.809) {};
\node[wB] at (0.588,-0.809) {};
\node[wB] at (0.951,0.309) {};
\end{scope}

\draw[-Stealth,ultra thick] (1.5,0)--(2.5,0);

\begin{scope}[shift={(4,0)}]
\node at (0,1.2) {\small 1 \normalsize};
\node at (-1.141,0.371) {\small 2 \normalsize};
\node at (-0.705,-0.971) {\small 3 \normalsize};
\node at (0.705,-0.971) {\small 4 \normalsize};
\node at (1.141,0.371) {\small 5 \normalsize};

\draw[-Stealth, shorten >=3pt,black] (0,1) to (-0.951,0.309);
\draw[-Stealth, shorten >=3pt,black] (0,1) to (0.588,-0.809);

\draw[-Stealth, shorten >=3pt,black] (-0.951,0.309) to (-0.588,-0.809);
\draw[-Stealth, shorten >=3pt,black] (-0.951,0.309) to (0.951,0.309);

\draw[-Stealth, shorten >=3pt,black] (-0.588,-0.809) to (0,1);
\draw[-Stealth, shorten >=3pt,red,thick] (0.951,0.309) to (-0.588,-0.809);

\draw[-Stealth, shorten >=3pt,black] (0.588,-0.809) to (-0.951,0.309);
\draw[-Stealth, bend left=10, shorten >=3pt,black] (0.588,-0.809) to (0.951,0.309);

\draw[-Stealth, shorten >=3pt,black] (0.951,0.309) to (0,1);
\draw[-Stealth, bend left=10, shorten >=3pt,black] (0.951,0.309) to (0.588,-0.809);

\node[wB] at (0,1) {};
\node[wB] at (-0.951,0.309) {};
\node[wB] at (-0.588,-0.809) {};
\node[wB] at (0.588,-0.809) {};
\node[wB] at (0.951,0.309) {};
\end{scope}

\end{tikzpicture}
    \caption{An imset equivalence class of size 2. This class corresponds to the minimal generator $z_{24 \to 1}z_{35 \to 2}z_{15 \to 3}z_{25  \to 4}z_{14 \to 5} - z_{35 \to 1}z_{14 \to 2}z_{25 \to 3}z_{15 \to 4}z_{24 \to 5} \in \chick_5$. The graphs are related by reversing all edges except for the red one.}
    \label{fig:minimalGenerator}
\end{figure}
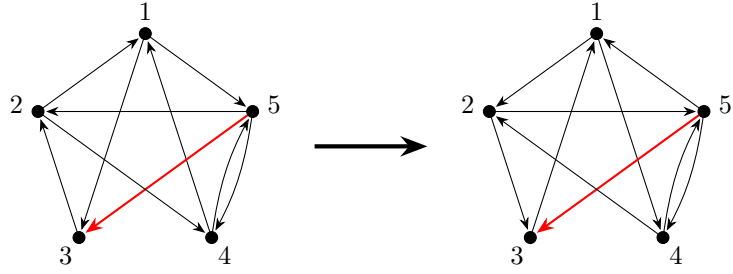
\end{remark}

\begin{definition}
\label{def:DoubledChickeringIdeal}
The \emph{doubled Chickering ideal} $\chick_n^\prime \subseteq \cc[y^\pm, z]$ is the binomial ideal 
\begin{align*}
\chick_n^\prime = \langle &y_{A \to b}y_{A \cup b \to c} - y_{A \to c}y_{A \cup c \to b}, \\
&y_{A \to b}z_{A \cup b \to c} - y_{A \to c}z_{A \cup c \to b}, \\
&z_{A \to b}z_{A \cup b \to c} - z_{A \to c}z_{A \cup c \to b}: b \neq c \in [n], A \subseteq [n] \setminus \{b,c\} \rangle.
\end{align*}
Furthermore, define
\begin{align*}
R_n = \cc[y^\pm,z] / \langle &y_{A \to b}y_{A \cup b \to c} - y_{A \to c}y_{A \cup c \to b}, \\ 
&y_{A \to b}z_{A \cup b \to c} - y_{A \to c}z_{A \cup c \to b} : b \neq c \in [n], A \subseteq [n] \setminus \{b,c\}\rangle.
\end{align*}
\end{definition}

We often think of $\chick_n^\prime$ as an ideal in $R_n$ instead of $\cc[y^\pm,z]$.

\begin{example}
If $n = 3$ then 
\begin{align*}
R_3 = \cc[y^\pm,z] /   \langle 
&y_{\emptyset \to 1} y_{1 \to 2} - y_{\emptyset \to 2} y_{2 \to 1},  y_{3 \to 1} y_{13 \to 2} - y_{3 \to 2} y_{23 \to 1}, \\
&y_{\emptyset \to 1} y_{1 \to 3} - y_{\emptyset \to 3} y_{3 \to 1}, y_{2 \to 1} y_{12 \to 3} - y_{2 \to 3} y_{23 \to 1}, \\
&y_{\emptyset \to 2} y_{2 \to 3} - y_{\emptyset \to 3} y_{3 \to 2}, y_{1 \to 2} y_{12 \to 3} - y_{1 \to 3} y_{13 \to 2}, \\
&y_{\emptyset \to 1} z_{1 \to 2} - y_{\emptyset \to 2} z_{2 \to 1},  y_{3 \to 1} z_{13 \to 2} - y_{3 \to 2} z_{23 \to 1}, \\
&y_{\emptyset \to 1} z_{1 \to 3} - y_{\emptyset \to 3} z_{3 \to 1}, y_{2 \to 1} z_{12 \to 3} - y_{2 \to 3} z_{23 \to 1}, \\
&y_{\emptyset \to 2} z_{2 \to 3} - y_{\emptyset \to 3} z_{3 \to 2}, y_{1 \to 2} z_{12 \to 3} - y_{1 \to 3} z_{13 \to 2}
\rangle.
\end{align*}
As an ideal in $R_3$, 
\begin{align*}
\chick_3^\prime =\langle 
z_{\emptyset \to 1} z_{1 \to 2} - z_{\emptyset \to 2} z_{2 \to 1},  z_{3 \to 1} z_{13 \to 2} - z_{3 \to 2} z_{23 \to 1}, \\
z_{\emptyset \to 1} z_{1 \to 3} - z_{\emptyset \to 3} z_{3 \to 1}, z_{2 \to 1} z_{12 \to 3} - z_{2 \to 3} z_{23 \to 1}, \\
z_{\emptyset \to 2} z_{2 \to 3} - z_{\emptyset \to 3} z_{3 \to 2}, z_{1 \to 2} z_{12 \to 3} - z_{1 \to 3} z_{13 \to 2}
\rangle.
\end{align*}
\end{example}

\begin{proposition}
$R_n$ is an integral domain.
\end{proposition}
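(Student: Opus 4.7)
The plan is to construct an explicit isomorphism $R_n \cong \cc[t_A^\pm : \emptyset \neq A \subseteq [n]][u_C : C \in \mathcal{C}]$ for a finite index set $\mathcal{C}$, which is manifestly an integral domain as a polynomial ring over a Laurent polynomial ring.

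First, I would analyze the subring generated by the $y$-variables in $R_n$. Applying the argument from the proof of Proposition~\ref{prop:LaurentIdeal} to the Laurent variables $y$, where no saturation is needed since each $y_{A \to b}$ is already a unit, the $y$-only binomials $y_{A\to b}y_{A\cup b \to c} - y_{A\to c}y_{A\cup c \to b}$ generate the lattice ideal associated to $\ker_\mathbb{Z} \varphi_n$, which is automatically saturated as the kernel of a $\mathbb{Z}$-linear map into a free abelian group. Hence this lattice ideal is prime, and the quotient is isomorphic to $\cc[t_A^\pm : \emptyset \neq A \subseteq [n]]$ via $y_{A \to b} \mapsto t^{\varphi_n(e_{A \to b})}$; surjectivity follows from Proposition~\ref{prop:upperTriangular}, which identifies the vectors $\varphi_n(e_{B \to b})$ with $\max(B) < b$ as a $\mathbb{Z}$-basis of $\mathbb{Z}^{2^n-1}$.

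Next, the mixed relation $y_{A\to b}z_{A\cup b\to c} = y_{A\to c}z_{A\cup c\to b}$ becomes, after this identification, $z_{A\cup b\to c} = t^{\varphi_n(e_{A\to c} - e_{A\to b})} z_{A\cup c\to b}$ in $R_n$. Let $\sim$ denote the equivalence relation on families generated by $(A\cup b\to c) \sim (A\cup c \to b)$, let $\mathcal{C}$ be its set of classes, and fix a representative $F_C$ for each $C \in \mathcal{C}$. I would define $\Phi\colon \cc[y^\pm, z] \to S \coloneqq \cc[t^\pm][u_C : C \in \mathcal{C}]$ by $\Phi(y_{A\to b}) = t^{\varphi_n(e_{A\to b})}$ and $\Phi(z_F) = t^{\varphi_n(e_F)} u_{[F]}$. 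A direct check, relying on the identity $\varphi_n(e_{A\to b} + e_{A\cup b \to c} - e_{A\to c} - e_{A\cup c \to b}) = 0$ established in the proof of Proposition~\ref{prop:LaurentIdeal}, confirms that $\Phi$ annihilates every generator of the defining ideal of $R_n$, so $\Phi$ descends to a ring homomorphism $\bar\Phi\colon R_n \to S$. Since $S$ is a domain, it suffices to show $\bar\Phi$ is injective.

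The main obstacle is constructing an explicit inverse $\Psi\colon S \to R_n$. Using the $\mathbb{Z}$-basis from Proposition~\ref{prop:upperTriangular}, one writes each basis vector $e_{\{A\}} \in \mathbb{Z}^{2^n-1}$ uniquely as $\sum c_{B\to b}\,\varphi_n(e_{B\to b})$ with the sum ranging over topologically ordered families $B \to b$, and sets $\Psi(t_A) = \prod y_{B\to b}^{c_{B\to b}}$. Then set $\Psi(u_C) = z_{F_C}\cdot \Psi(t^{-\varphi_n(e_{F_C})})$. The key well-definedness check is that $\Psi(u_C)$ does not depend on the representative $F_C$; this reduces, via transitivity, to the case of a single link $F_C \sim F'_C$, where the mixed relation in $R_n$ together with the fact that $\Psi$ sends $t^{\varphi_n(v)}$ to $1$ in $R_n$ for every $v \in \ker \varphi_n$ (a consequence of the $y$-only relations) furnishes exactly the needed rewriting. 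Checking $\bar\Phi \circ \Psi = \mathrm{id}_S$ and $\Psi \circ \bar\Phi = \mathrm{id}_{R_n}$ on generators is then a direct computation, using iterated links to connect any family to its class representative, which completes the proof that $R_n$ is a domain.
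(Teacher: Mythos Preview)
Your proof is correct. It follows the same two-step skeleton as the paper's---first handle the $y$-only relations to obtain a Laurent polynomial ring in the $t_A$, then handle the mixed relations as linear constraints in the $z$-variables---but your execution of the second step is genuinely more explicit. The paper simply notes that the mixed binomials are homogeneous linear forms in $z$ with unit coefficients in $S[z]$, and that quotienting by such forms yields an integral domain; it stops there. You instead build an explicit isomorphism $R_n \cong \cc[t_A^\pm][u_C : C\in\mathcal C]$ by writing down maps in both directions and checking they are mutually inverse.

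What your approach buys is a concrete presentation of $R_n$ and, more importantly, it makes explicit the consistency check that the paper leaves implicit: when one eliminates $z$-variables using linear relations of the shape $z_i = u\, z_j$ with $u$ a unit, cycles in the resulting equivalence classes could in principle produce relations $(1-u')z=0$ with $u'\neq 1$, which would destroy integrality. Your verification that $z_F\cdot\Psi(t^{-\varphi_n(e_F)})$ is constant on each $\sim$-class is exactly the statement that all such cycle products equal $1$ in the $y$-quotient, and this is where the kernel identity from Proposition~\ref{prop:LaurentIdeal} does the real work. The paper's argument is shorter; yours is more self-contained on this point.

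One minor wording issue: once the representatives $F_C$ are fixed, $\Psi$ is automatically a well-defined ring map; what you label ``well-definedness of $\Psi(u_C)$'' is really the identity $z_F\cdot\Psi(t^{-\varphi_n(e_F)})=z_{F_C}\cdot\Psi(t^{-\varphi_n(e_{F_C})})$ in $R_n$ for all $F\in C$, which is precisely what is needed to verify $\Psi\circ\bar\Phi(z_F)=z_F$. The content is correct, only the label is slightly off.
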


\begin{proof}
By Proposition~\ref{prop:LaurentIdeal} we know $I \coloneqq \langle y_{A \to b}y_{A \cup b \to c} - y_{A \to c}y_{A \cup c \to b} : b,c \in [n], A \subseteq [n] \setminus \{b,c\} \rangle$ is toric, and so $\cc[y^\pm,z] / I$ is an integral domain. Up to isomorphism, this quotient ring is $S[z]$ for some integral domain $S$. It remains to quotient by binomials of the form $y_{A \to b}z_{A \cup b \to c} - y_{A \to c}z_{A \cup c \to b}$, which are homogeneous linear forms in the $z$-variables with coefficients that are units in $S[z]$. Quotienting an integral domain by these linear forms yields an integral domain.
\end{proof}

We now turn towards the problem of radicality of the doubled Chickering ideal. Neither $R_n$ nor $\cc[y^\pm,z]$ are polynomial rings, so Gr\"obner basis techniques do not apply here. We instead compute a primary decomposition by finding a sequence of splitting polynomials.

\begin{proposition}
\label{prop:newGenerators}
As an ideal in $R_n$,
\begin{align*}\chick_n^\prime = \langle z_{A \cup c \to b} \left( z_{A \to b} y_{A \to c} - z_{A \to c} y_{A \to b} \right): b \neq c \in [n], A \subseteq [n] \setminus \{b,c\} \rangle.\end{align*}
\end{proposition}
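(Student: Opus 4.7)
The plan is to prove the equality of ideals by showing that each generator on one side is a unit multiple of the generator with the same indices on the other side, so that no containment argument beyond a direct algebraic manipulation is needed. The leverage for this comes from the fact that in passing to $R_n$ we have already imposed the mixed relations $y_{A \to b} z_{A \cup b \to c} = y_{A \to c} z_{A \cup c \to b}$, and that the $y$-variables are units because they live in $\mathbb{C}[y^\pm, z]$ before the quotient (so their inverses descend to $R_n$).

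Fix $b \neq c \in [n]$ and $A \subseteq [n]\setminus\{b,c\}$. First I would use the mixed relation to solve
$$z_{A \cup b \to c} = y_{A \to b}^{-1} y_{A \to c} z_{A \cup c \to b} \quad \text{in } R_n,$$
which is legitimate because $y_{A \to b}$ is invertible. Substituting this into the defining generator of $\chick_n'$ gives
$$z_{A \to b} z_{A \cup b \to c} - z_{A \to c} z_{A \cup c \to b} = y_{A \to b}^{-1} z_{A \cup c \to b}\bigl(y_{A \to c} z_{A \to b} - y_{A \to b} z_{A \to c}\bigr),$$
after pulling $z_{A \cup c \to b}$ out as a common factor. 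Since $y_{A \to b}^{-1}$ is a unit in $R_n$, this identity shows that the $zz$-binomial and the candidate new generator $z_{A \cup c \to b}(z_{A \to b}y_{A \to c} - z_{A \to c}y_{A \to b})$ generate the same principal ideal, and running the implication both ways gives the two inclusions of the claimed equality.

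The argument is entirely local (index by index), so there is no coordination required across different $(A, b, c)$; every generator of one ideal coincides, up to a unit, with the generator of the other ideal having matching indices. The only potential obstacle I anticipate is making sure the relations in $R_n$ are applied correctly on both sides, in particular verifying that $y_{A \to b}$ really is a unit in $R_n$ rather than merely nonzero — but this follows directly from the construction of $R_n$ as a quotient of $\mathbb{C}[y^\pm, z]$ by binomials not involving any pure $y$-variable outside of the toric ideal, which by the preceding proposition yields an integral domain in which the Laurent $y$'s remain invertible.
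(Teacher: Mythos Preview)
Your proposal is correct and is essentially the paper's own proof: both multiply the $zz$-binomial by the unit $y_{A \to b}$ (equivalently, substitute using the mixed relation) and factor to obtain $z_{A \cup c \to b}(z_{A \to b}y_{A \to c} - z_{A \to c}y_{A \to b})$. The only thing you leave implicit that the paper states explicitly is that the $yy$- and $yz$-generators of $\chick_n'$ are already zero in $R_n$, so the $zz$-binomials alone generate $\chick_n'$ there; you might add one sentence making this explicit.
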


\begin{proof}
Clearly $\chick_n^\prime = \langle  z_{A \to b} z_{A \cup b \to c} - z_{A \to c} z_{A \cup c \to b} : b \neq c \in [n], A \subseteq [n] \setminus \{b,c\} \rangle$ since all other generators are $0$ in $R_n$. Any generator $z_{A \to b}z_{A \cup b \to c} - z_{A \to c}z_{A \cup c \to b}$ can be multiplied by the unit $y_{A \to b}$ without changing the ideal. Using the fact that $y_{A \to b} z_{A \cup b \to c} = y_{A \to c} z_{A \cup c \to b}$ we obtain
\begin{align*}y_{A \to b}\left(z_{A \to b} z_{A \cup b \to c} - z_{A \to c} z_{A \cup c \to b}\right) = z_{A \cup c \to b} \left( z_{A \to b} y_{A \to c} - z_{A \to c} y_{A \to b} \right).\end{align*}
\end{proof}

\begin{definition}
Let $\mathfrak{B}_n$ be the subposet of the Boolean lattice $2^{[n]}$ on the set $\{A \in 2^{[n]}: \#A \geq 2\}$. Let $F$ be an upward closed subset of $\mathfrak{B}_n$ and let $E \subseteq F$. We define an ideal $I_{E,F} \subseteq R_n$ to be
\begin{equation}
\label{equation:ief}
\begin{split}
I_{E, F} &= \langle z_{A \cup c \to b} \left( z_{A \to b} y_{A \to c} - z_{A \to c}y_{A \to b} \right) : A \cup \{b,c\} \in \mathfrak{B}_n \setminus F \rangle \\
&+ \langle z_{A \to b}: A \cup \{b\} \in E \rangle \\
&+ \langle  z_{A \to b} y_{A \to c} - z_{A \to c}y_{A \to b} : A \cup \{b,c\} \in F \setminus E \rangle.
\end{split}
\end{equation}
In the case of $F = \mathfrak{B}_n$, we let $P_E = I_{E, \mathfrak{B}_n}$.
\end{definition}

\begin{proposition}
\label{lemma:primaryDecomposition} 
For all pairs $E \subseteq F$, $I_{E,F}$ is a radical ideal. Furthermore,
\begin{align*}\chick_n^\prime = \bigcap\limits_{E \subseteq \mathfrak{B}_n} P_E 
\end{align*}
yields a primary decomposition of $\chick_n^\prime$ into toric ideals.
\end{proposition}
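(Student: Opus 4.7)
The plan is to induct on $|\mathfrak{B}_n \setminus F|$ and simultaneously establish radicality of each $I_{E, F}$ and the primary decomposition of $\chick_n^\prime$. The engine is the ideal-theoretic splitting
\begin{equation*}
I_{E, F} = I_{E, F \cup \{S\}} \cap I_{E \cup \{S\}, F \cup \{S\}}
\end{equation*}
for any $S$ maximal in $\mathfrak{B}_n \setminus F$ (maximality ensures $F \cup \{S\}$ remains upward closed, so the right-hand side is well-defined). The base case $F = \mathfrak{B}_n$ amounts to showing $P_E = I_{E, \mathfrak{B}_n}$ is a toric prime: in the quotient $R_n / P_E$ one sets $z_{A \to b} = 0$ for $A \cup \{b\} \in E$, while the surviving $z$-variables are pinned to scalar multiples of the corresponding $y$-variables within each ``layer'' $A \cup \{b\} \not\in E$ via the relations $z_{A \to b} y_{A \to c} = z_{A \to c} y_{A \to b}$. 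Since $R_n$ is itself a toric domain, a direct monomial parameterization extends to $R_n / P_E$, so $P_E$ is a toric prime. This both handles the base case of the radicality induction and justifies the toric claim in the decomposition.

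For the splitting lemma, let $I$ denote the subideal of $I_{E, F}$ generated by everything not indexed by $S$; all three ideals in the splitting contain $I$. Writing $h_{A,b,c} = z_{A \to b} y_{A \to c} - z_{A \to c} y_{A \to b}$ and using $A \cup \{c\} = S \setminus \{b\}$ whenever $A \cup \{b,c\} = S$, the $S$-indexed contributions to the three ideals are $(z_{S \setminus \{b\} \to b}\, h_{A,b,c})$, $(h_{A,b,c})$, and $(z_{S \setminus \{b\} \to b})$ respectively. The inclusion $\subseteq$ is immediate from the factorization of the cubic generator. For the reverse inclusion, the crucial point is that every $z$-variable appearing in an $h_{A,b,c}$ has parent set of size $|S| - 2$, whereas the monomial ideal $\mathfrak{m}_S = (z_{S \setminus \{b\} \to b} : b \in S)$ is generated by $z$-variables with parent sets of size $|S| - 1$; hence the monomials of $h_{A,b,c}$ are disjoint from $\mathfrak{m}_S$. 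Given $g$ in the intersection, I would write $g = g_0 + \sum \alpha_{A,b,c}\, h_{A,b,c}$ with $g_0 \in I$. The hypothesis $g \in I + \mathfrak{m}_S$, combined with this transversality in the toric quotient $R_n / I$, forces each $\alpha_{A,b,c}$ into $\mathfrak{m}_S + I$; substituting $\alpha_{A,b,c} \equiv \sum_{b'} \beta_{b'}\, z_{S \setminus \{b'\} \to b'}$ and using the $R_n$-relations $y_{A \to b'}\, z_{A \cup \{b'\} \to b''} = y_{A \to b''}\, z_{A \cup \{b''\} \to b'}$ to realign multipliers rewrites $g$ as a combination of the cubic generators of $I_{E, F}$, completing the inclusion.

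Given the splitting, the induction runs cleanly: each $I_{E, F}$ with $F \subsetneq \mathfrak{B}_n$ is an intersection of two radical ideals with strictly larger $F$, hence radical. Iterating from $I_{\emptyset, \emptyset} = \chick_n^\prime$ across the $2^{|\mathfrak{B}_n|}$ binary choice-sequences of ``add $S$ to $E$ or not'' produces the decomposition $\chick_n^\prime = \bigcap_{E \subseteq \mathfrak{B}_n} P_E$ into toric primes, which is the desired primary decomposition. I expect the main obstacle to be making the transversality argument rigorous: namely, verifying that $\sum \alpha_{A,b,c}\, h_{A,b,c} \in I + \mathfrak{m}_S$ forces each $\alpha_{A,b,c} \in I + \mathfrak{m}_S$. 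This likely requires tracking a fine multigrading on $R_n$ compatible with the family indexing and exploiting the maximality of $S$ in $\mathfrak{B}_n \setminus F$ to rule out unexpected cancellations coming from higher-degree generators already absorbed into $I$.
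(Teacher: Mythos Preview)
Your inductive scaffold --- split $I_{E,F}$ at a maximal $S \in \mathfrak{B}_n \setminus F$ and reduce to the primes $P_E$ --- is exactly the paper's. The base case is also fine; the paper phrases it more simply by observing that $P_E$ is generated in $R_n$ by linear forms in the $z$-variables, hence prime, but your parameterization sketch works too.

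The gap is in the $\supseteq$ direction of the splitting. Your transversality claim, that $\sum \alpha_{A,b,c}\, h_{A,b,c} \in I + \mathfrak{m}_S$ forces each $\alpha_{A,b,c} \in I + \mathfrak{m}_S$, is both stronger than you need and not clearly true: the $h_{A,b,c}$ involve $z$-variables whose family sets $S \setminus \{b\}$ may themselves lie in $E$ (nothing prevents proper subsets of $S$ from being in $E \subseteq F$), so some $h$'s can collapse to unit multiples of a single $z$-variable or even land entirely in $I$, spoiling the disjointness picture. The paper sidesteps this by a colon-ideal argument. Fix one $b^* \in S$ and set $z^* = z_{S \setminus \{b^*\} \to b^*}$. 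Then one shows
\[
I_{E \cup \{S\}, F \cup \{S\}} = I_{E,F} + \langle z^* \rangle
\qquad\text{and}\qquad
I_{E, F \cup \{S\}} = I_{E,F} : z^* = I_{E,F} : (z^*)^\infty,
\]
from which the splitting $I_{E,F} = (I_{E,F} + \langle z^*\rangle) \cap (I_{E,F} : z^*)$ follows formally, since $J : f = J : f^\infty$ implies $J = (J + (f)) \cap (J : f)$ in any commutative ring. The first equality uses exactly the $R_n$-relation you cite, $y_{A \to b'} z_{A \cup \{b'\} \to b''} = y_{A \to b''} z_{A \cup \{b''\} \to b'}$, to propagate the single generator $z^*$ to all of $\mathfrak{m}_S$; the second uses the same relation to see that colon by $z^*$ equals colon by the full product $\prod_{b \in S} z_{S \setminus \{b\} \to b}$, after which one checks that $z^*$ is a nonzerodivisor modulo the linear part of $I_{E,F\cup\{S\}}$. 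This avoids any multigrading bookkeeping and is the missing ingredient in your proposal.
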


\begin{proof}
In the case of $F = \mathfrak{B}_n$, $P_E = I_{E,F}$ is generated by homogeneous linear forms in the $z$-variables, and so $P_E$ is prime whether it is considered as an ideal in $R_n$ or $\cc[y^\pm,z]$. By Proposition~\ref{prop:newGenerators} we know that in the case that $E = F = \emptyset$, $I_{\emptyset, \emptyset} = \chick_n^\prime$. Hence it remains to show that for any upward closed $F$, $E \subseteq F$, and maximal element $D \in \mathfrak{B}_n \setminus F$,
\begin{align*}
I_{E,F} = I_{E \cup \{D\}, F \cup \{D\}} \cap I_{E, F \cup \{D\}}.
\end{align*}
Let $b^* \in D$ and $A^* = D \setminus b^*$. It suffices to show that 

\begin{enumerate}[label=\textbf{Claim \arabic*.}, leftmargin=*, align=left]
    \item  $I_{E \cup \{D\}, F \cup \{D\}} = I_{E, F} + \langle z_{A^* \to b^*} \rangle$ and
    \item $I_{E, F \cup \{D\}} = I_{E, F} : z_{A^* \to b^*} = I_{E, F} : z_{A^* \to b^*}^\infty$.
\end{enumerate}

We first prove Claim 1. First let $c^* \in A^*$ and let $\bar{A} = A^* \setminus c^\star$. Then $z_{\bar{A} \cup c^* \to b^*}, y_{\bar{A} \to b^*} z_{\bar{A} \cup b^* \to c^*} - y_{\bar{A} \to c^*} z_{\bar{A} \cup c^* \to b^*} \in I_{E,F} + \langle z_{A^* \to b^*} \rangle$. Since $y_{\bar{A} \to b^*}$ and $y_{\bar{A} \to c^*}$ are units, it follows that $z_{\bar{A} \cup b^* \to c^*} \in I_{E,F} + \langle z_{A^* \to b^*} \rangle$. Consequently
\begin{equation}
\label{equation:iefSum}
\begin{split}
I_{E,F} + \langle z_{A^* \to b^*} \rangle 
&= I_{E,F} + \langle z_{A \to b}: b \in D, A= D \setminus b \rangle \\
&= \langle z_{A \cup c \to b} \left( z_{A \to b} y_{A \to c} - z_{A \to c}y_{A \to b} \right) : A \cup \{b,c\} \in \mathfrak{B}_n \setminus F \rangle \\
&+ \langle z_{A \to b}: A \cup b \in E \cup \{D\} \rangle \\
&+ \langle  z_{A \to b} y_{A \to c} - z_{A \to c}y_{A \to b} : A \cup \{b,c\} \in F \setminus E \rangle.
\end{split}
\end{equation}
The second and third ideal summands in the definition of $I_{E \cup \{D\},F \cup \{D\}}$ agree with~\Cref{equation:iefSum} since $F \setminus E = (F \cup \{D\}) \setminus (E \cup \{D\})$. For the first summand, if $b \neq c \in D$ and $A = D \setminus \{b,c\}$, then $z_{A \cup c \to b} \left( z_{A \to b} y_{A \to c} - z_{A \to c}y_{A \to b} \right)$ is redundant since $z_{A \cup c \to b}$ lies in the second ideal summand in~\Cref{equation:iefSum}. Thus $I_{E,F} + \langle z_{A^* \to b^*} \rangle = I_{E \cup \{D\}, F \cup \{D\}}$.

We now prove Claim 2. Let $k$ be any positive integer. For any $b,c \in D$, $y_{D \setminus \{b,c\} \to c}z_{D \setminus b \to b} - y_{D \setminus \{b,c\} \to b}z_{D \setminus c \to c} \in I_{E,F}$. Consequently, if $z_{D \setminus b \to b} \cdot f \in I_{E,F}$ for some polynomial $f$, then
\begin{align*}z_{D \setminus c \to c}^k \cdot f = \frac{y_{D \setminus \{b,c\} \to c}^k}{y_{D \setminus \{b,c\} \to b}^k} z_{D \setminus b \to b}^k \cdot f \in I_{E,F}.\end{align*}
Hence
\begin{align*}I_{E,F} : z_{A^* \to b^*}^k = I_{E,F} : \left( \prod\limits_{b \in D} z_{D \setminus \{b\} \to b}\right)^k.\end{align*}
For any $A \cup \{b,c\} = D$, we know that $I_{E,F}$ contains $z_{A \cup c \to b} \left( z_{A \to b} y_{A \to c} - z_{A \to c}y_{A \to b} \right)$ and so $z_{A \to b} y_{A \to c} - z_{A \to c}y_{A \to b} \in I_{E, F} : z_{A^* \to b^*}^k$. Thus
\begin{equation}
\label{equation:iefQuotient}
\begin{split}
I_{E, F}:z_{A^* \to b^*}^k &= \Bigl( \langle z_{A \cup c \to b} \left( z_{A \to b} y_{A \to c} - z_{A \to c}y_{A \to b} \right) : A \cup \{b,c\} \in \mathfrak{B}_n \setminus (F \cup \{D\}) \rangle \\
&+ \langle z_{A \to b}: A \cup \{b\} \in E \rangle \\
&+ \langle  z_{A \to b} y_{A \to c} - z_{A \to c}y_{A \to b} : A \cup \{b,c\} \in (F \cup \{D\}) \setminus E \rangle\Bigl): z_{A^* \to b^*}^k \\
&= I_{E, F \cup \{D\}} : z_{A^* \to b^*}^k.
\end{split}
\end{equation}
In $I_{E,F \cup \{D\}}$, the variable $z_{A^* \to b^*}$  only appears in homogeneous linear forms in the $z$-variables. Taking the quotient of $R_n$ by these generators collects $z$-variables that appear in the same linear form into equivalences classes $w_i$, and we obtain a ring which is isomorphic to
\begin{align*}
\left( \cc[y^\pm] / \langle y_{A \to b}y_{A \cup \{b\} \to c} - y_{A \to c}y_{A \cup \{c\} \to b} : b \neq c \in [n], A \subseteq [n] \setminus \{b,c\} \rangle \right)[w_1,\dots,w_\ell].
\end{align*}
In this ring, no generator of $I_{E,F \cup \{D\}}$ contains the equivalence class of $z_{A^* \to b^*}$ and so
\begin{align*}I_{E,F}:z_{A^* \to b^*}^k = I_{E, F \cup \{D\}} : z_{A^* \to b^*}^k = I_{E,F \cup \{D\}}.\end{align*}
Since this equation holds for all $k$, Claim 2 holds.
\end{proof}

\begin{proposition}
\label{prop:closedUnderTorusAction}
Consider the torus action $(\cc^*)^{2^n+n} \, \acts \, (\left(\cc^*\right)^{n \cdot 2^{n-1}} \times \cc^{n \cdot 2^{n-1}})$ given by
\begin{align*}
\left( (\vec{t},\vec{r}) \, \acts \, (y,z)\right)_{A \to b} &= \left( \frac{t_{A \cup b}}{t_A} \cdot y_{A \to b}, \, r_{\#A+1} \cdot \frac{t_{A \cup b}}{t_A} \cdot  z_{A \to b} \right),
\end{align*}
where the coordinates of $\vec{t}$ are indexed by all sets $A\subseteq [n]$ and $\vec{r} = (r_1,\dots, r_n)$. The variety $V(\chick_n^\prime)$ is closed under this action.
\end{proposition}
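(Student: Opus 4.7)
The plan is to verify directly that every binomial generator of $\chick_n^\prime$ (viewed as an ideal in $\cc[y^\pm,z]$ via the defining generators of $R_n$) is a scalar eigenvector of the torus action. Concretely, I will show that under $(\vec t,\vec r)\,\acts\,(y,z)$, each of the three families of generators
\begin{align*}
&y_{A\to b}y_{A\cup b\to c}-y_{A\to c}y_{A\cup c\to b},\\
&y_{A\to b}z_{A\cup b\to c}-y_{A\to c}z_{A\cup c\to b},\\
&z_{A\to b}z_{A\cup b\to c}-z_{A\to c}z_{A\cup c\to b}
\end{align*}
gets multiplied by a nonzero scalar (a Laurent monomial in the $t$'s and $r$'s). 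Once this is checked, the conclusion follows: a point $(y,z)\in V(\chick_n^\prime)$ kills each generator, and scaling by a nonzero scalar still gives zero, so $(\vec t,\vec r)\,\acts\,(y,z)\in V(\chick_n^\prime)$ as well.

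The verification is a routine bookkeeping exercise with the exponents on $t_A$. The key observation is that in each generator the two monomials involve the same four families $\{A\to b,\,A\to c,\,A\cup b\to c,\,A\cup c\to b\}$, and the $t$-weight assigned to each family is $t_{A\cup b}/t_A$, $t_{A\cup c}/t_A$, $t_{A\cup b\cup c}/t_{A\cup b}$, and $t_{A\cup b\cup c}/t_{A\cup c}$ respectively. Multiplying the weights along each of the two monomials telescopes to the common factor $t_{A\cup b\cup c}/t_A$. The $r$-part contributes $r_{\#A+1}^{\,i}r_{\#A+2}^{\,j}$ for $(i,j)=(0,0),(0,1),(1,1)$ in the three cases, the same on both monomials. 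So each generator is sent to a nonzero scalar multiple of itself.

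With this scaling property in hand, the proof is immediate. If $F$ is any of the listed generators and $F(y,z)=0$, writing $F((\vec t,\vec r)\,\acts\,(y,z))=\mu(\vec t,\vec r)\,F(y,z)$ with $\mu(\vec t,\vec r)\in\cc^{*}$, we conclude $F((\vec t,\vec r)\,\acts\,(y,z))=0$. Since the torus action preserves the ambient space $(\cc^{*})^{n\cdot 2^{n-1}}\times\cc^{n\cdot 2^{n-1}}$ (the $y$-coordinates remain nonzero), the full variety $V(\chick_n^\prime)$ is closed under the action.

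The main (mild) obstacle is really just notational: one must carefully keep track of whether the active family on each side of a binomial is $A\to b$, $A\to c$, $A\cup b\to c$, or $A\cup c\to b$, and in the mixed $yz$-case keep track of which of $\#A+1$ or $\#A+2$ the $r$-factor references. Since the set sizes on matching sides of each binomial agree, the $r$-factors pull out as common scalars and the $t$-factors telescope as described; no deeper structural argument is required.
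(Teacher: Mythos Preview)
Your proposal is correct and follows essentially the same approach as the paper: both verify that each binomial generator is scaled by a common nonzero factor under the action. The only cosmetic difference is that the paper splits the action into the $(\vec t,1)$-part and the $(1,\vec r)$-part and checks them separately, whereas you handle both simultaneously; the underlying telescoping computation $t_{A\cup b\cup c}/t_A$ and the matching of $r$-exponents are identical.
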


\begin{proof}
Let $(y,z) \in V(\chick_n^\prime)$. Acting on $(y,z)$ by some $(1,r)$ with $r \in \left(\mathbb{C}^*\right)^n$, scales each $z_{A \to b}$ by $r_{\#A+1}$. Since the generators of $\chick_n^\prime$ have the same number of $z_{A \to b}$ of any given index size in each monomial, we know that $(1,r) \, \acts \, (y,z) \in V(\chick_n^\prime)$.

It remains to consider actions $(y^*,z^*) := (t,1) \, \acts \, (y,z)$ where $t \in \left(\mathbb{C}^* \right)^{2^n}$. For all $b\neq c \in [n]$ and $A\subseteq [n]\setminus \{b,c\}$,
\begin{align*}
y_{A \to b}^* z_{A \cup b \to c}^* - y_{A \to c}^* z_{A \cup c \to b}^* = \frac{t_{A \cup b \cup c}}{t_{A}} \left( y_{A \to b} z_{A \cup b \to c} - y_{A \to c} z_{A \cup c \to b} \right) = 0.
\end{align*}
Analogously $y_{A \to b}^* y_{A \cup b \to c}^* - y_{A \to c}^* y_{A \cup c \to b}^* = 0$ and $z_{A \to b}^* z_{A \cup b \to c}^* - z_{A \to c}^* z_{A \cup c \to b}^* = 0$.
\end{proof}

\begin{lemma}
\label{lemma:doubledChickeringIdealSaturation}
The variety $V(P_\emptyset)$ is the Zariski closure of the torus orbit generated by $\vec{1} \in \cc^{2n \cdot 2^{n-1}}$ under the torus action $(\cc^*)^{2^n+n} \, \acts \, (\left(\cc^*\right)^{n \cdot 2^{n-1}} \times \cc^{n \cdot 2^{n-1}})$ given by
\begin{align*}
\left( (\vec{t},\vec{r}) \, \acts \, (y,z)\right)_{A \to b} &= \left( \frac{t_{A \cup b}}{t_A} \cdot y_{A \to b}, \, r_{\#A+1} \cdot \frac{t_{A \cup b}}{t_A} \cdot  z_{A \to b} \right),
\end{align*}
where the coordinates of $\vec{t}$ are indexed by all sets $A\subseteq [n]$ and $\vec{r} = (r_1,\dots, r_n)$.
\end{lemma}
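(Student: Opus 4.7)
The plan is to prove the equality by two-way containment, combining a direct check for one inclusion with an irreducibility plus dimension argument for the other. For the containment of the orbit closure in $V(P_\emptyset)$, I substitute the parametric coordinates $y_{A \to b} = t_{A \cup b}/t_A$ and $z_{A \to b} = r_{\#A+1} \cdot t_{A \cup b}/t_A$ into each generator of $P_\emptyset$ given by Proposition~\ref{lemma:primaryDecomposition}. The purely $y$-binomials and the $yz$-binomials vanish by the same telescoping used in Proposition~\ref{prop:closedUnderTorusAction} (both sides simplify to $t_{A \cup b \cup c}/t_A$ up to an $r_k$ factor), and the new generators $z_{A \to b}y_{A \to c} - z_{A \to c}y_{A \to b}$ vanish by symmetry in $b,c$, since both terms equal $r_{\#A+1} \cdot t_{A \cup b}t_{A \cup c}/t_A^2$. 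Since $V(P_\emptyset)$ is Zariski closed, it contains the Zariski closure of the entire orbit.

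For the reverse inclusion, both $V(P_\emptyset)$ and the orbit closure are irreducible: the former because Proposition~\ref{lemma:primaryDecomposition} shows $P_\emptyset$ is prime, and the latter because it is the closure of the image of the irreducible torus $(\cc^*)^{2^n+n}$. It therefore suffices to show they have the same dimension. The stabilizer of $\vec{1}$ consists of pairs $(\vec{t},\vec{r})$ for which $t_{A \cup b}/t_A = 1$ for all $A$ and $b \notin A$ (forcing $\vec{t}$ to be a constant vector) and $r_k = 1$ for every $k \in \{1,\ldots,n\}$, so the stabilizer is one-dimensional and the orbit has dimension $2^n + n - 1$. On the other hand, I claim an isomorphism $R_n/P_\emptyset \cong T[w_1,\ldots,w_n]$, where $T = \cc[y^\pm]/\langle y_{A \to b}y_{A \cup b \to c} - y_{A \to c}y_{A \cup c \to b}\rangle$ is the toric ring for the monomial map $y_{A \to b} \mapsto t_{A \cup b}/t_A$, and $w_k$ corresponds to the common value of $z_{A \to b}/y_{A \to b}$ over families with $\#A = k-1$. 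The image of this monomial map is the degree-zero part of $\cc[t_A^{\pm}:A\subseteq[n]]$, which has dimension $2^n-1$, giving $\dim V(P_\emptyset) = (2^n-1)+n = 2^n+n-1$ and matching the orbit dimension.

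The main obstacle is justifying the polynomial-ring identification $R_n/P_\emptyset \cong T[w_1,\ldots,w_n]$, which reduces to a combinatorial claim about the $z$-variables. The substitution $z_{A \to b} \mapsto y_{A \to b}\, w_{\#A+1}$ visibly kills each defining relation of $R_n$ and each new generator of $P_\emptyset$, so the real content is verifying that the equivalence relation on families generated by (a) $A \to b \sim A \to c$ (same $A$) and (b) $A \to b \sim A' \to c$ whenever $A \cup \{b\} = A' \cup \{c\}$ has equivalence classes indexed precisely by $\#A$. Invariance of $\#A$ under both moves is immediate; the converse follows by induction on the symmetric difference $A \triangle A'$: given $A \to b$ and a one-element swap $A' = (A \setminus a)\cup a'$, one applies (a) to redirect to $A \to a'$, then (b) to land at $A' \to a$, then (a) again to reach $A' \to b$, strictly reducing $\#(A \triangle A')$. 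Once this is established, the freeness of $w_1,\ldots,w_n$ over $T$ is a routine substitution check, and the lemma follows.
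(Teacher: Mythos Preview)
Your proof is correct and takes a genuinely different route from the paper.  The paper argues the hard containment by showing directly that every point of $V(P_\emptyset)$ with all coordinates nonzero lies in the torus orbit: it first acts by some $(\vec t,1)$ to normalize the $y$-coordinates to $1$ (using Proposition~\ref{prop:LaurentIdeal}), observes that the defining relations then force $z^*_{A\to b}=z^*_{C\to d}$ whenever $\#A=\#C$, and finally acts by a suitable $(1,\vec r)$ to normalize $z^*$.  You instead run a dimension count: after the easy containment, you compute $\dim V(P_\emptyset)$ via the structural isomorphism $R_n/P_\emptyset\cong T[w_1,\dots,w_n]$ and match it against the orbit dimension obtained from the stabilizer.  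Both arguments rest on the same combinatorial core --- that the equivalence relation on families generated by your moves (a) and (b) has classes indexed by $\#A$ --- which the paper simply asserts and you spell out.  What your approach buys is the explicit coordinate-ring description $R_n/P_\emptyset\cong T[w_1,\dots,w_n]$ as a byproduct; what the paper's approach buys is that it avoids invoking the rank of $\varphi_n$ (Proposition~\ref{prop:upperTriangular}) and needs no dimension bookkeeping.

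One small remark on your inductive sketch: the step ``(a) again to reach $A'\to b$'' tacitly assumes $b\notin A'=(A\setminus a)\cup a'$, which fails precisely when $b=a'$.  This is harmless --- when $\#A\le n-2$ you may first use (a) to replace $b$ by some $b'\notin A\cup\{a'\}$, and when $\#A=n-1$ a single application of (b) already connects $[n]\setminus b\to b$ to $[n]\setminus a\to a$ --- but it is worth patching explicitly.
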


\begin{proof}
Let $J$ denote the toric ideal associated to the torus action defined in Lemma~\ref{lemma:doubledChickeringIdealSaturation}. It is straightforward to show that $P_\emptyset \subseteq J$, and so it remains to show that $J \subseteq P_\emptyset$. By \Cref{lemma:primaryDecomposition} we know that both $J$ and $P_\emptyset$ are prime ideals, and so equivalently we must show  $V(P_\emptyset) \subseteq V(J)$.

Fix $(y,z) \in V(P_\emptyset)$ and let $(y^*,z^*) := (t,1) \, \acts \, (y,z)$ for some $t \in \left(\cc^*\right)^{2^n}$. First observe that $z_{A \to b}^* y_{A  \to c}^* - z_{A \to c}^* y_{A  \to b}^* = 0$ by the same argument in \Cref{prop:closedUnderTorusAction}, and so $V(P_\emptyset)$ is closed under the torus action.

We know that $P_\emptyset$ and $J$ are both prime. To see that $V(P_\emptyset) \subseteq V(J)$, we only need to show that if $(y,z) \in \left(\cc^*\right)^{2n \cdot 2^{n-1}} \cap V(P_\emptyset)$, then $(y,z)$ is in the torus orbit of $(1,1)$. By \Cref{prop:LaurentIdeal} there exists $t \in \left(\cc^*\right)^{2^n}$ such that $(t,1) \, \acts \, (y,z) = (1,z^*)$ for some $z^*$. Since $(1,z^*) \in V(P_\emptyset)$, we know that $z_{A \to b}^* = z_{C \to d}^*$ whenever $\#A = \#C$. Acting on $(1, z^*)$ by the appropriate vector $(1,r)$ scales $z_{A \to b}^*$ to 1 for all $A,b$.
\end{proof}

\begin{example}
In the case of $n=3$, the variety $V(P_\emptyset)$ is the toric variety associated to the matrix
\begin{align*}
\bordermatrix{ & z_{\emptyset \to 1} & z_{2 \to 1} & z_{3 \to 1} & z_{23 \to 1} & z_{\emptyset \to 2} & \cdots & y_{\emptyset \to 3} & y_{1 \to 3} & y_{2 \to 3} & y_{12 \to 3} \cr
t_{\emptyset}   & -1 & 0 & 0 & 0 & -1 & \cdots & -1 & 0 & 0 & 0 \cr
t_{1}   & 1 & 0 & 0 & 0 & 0 & \cdots & 0 & -1 & 0 & 0 \cr
t_{2}   & 0 & -1 & 0 & 0 & 1 &        & 0 & 0 & -1 & 0 \cr
t_{3}   & 0 & 0 & -1 & 0 & 0 &        & 1 & 0 & 0 & 0 \cr
t_{12}  & 0 & 1 & 0 & 0 & 0 &        & 0 & 0 & 0 & -1 \cr
t_{13}  & 0 & 0 & 1 & 0 & 0 &        & 0 & 1 & 0 & 0 \cr
t_{23}  & 0 & 0 & 0 & -1 & 0 &        & 0 & 0 & 1 & 0 \cr
t_{123} & 0 & 0 & 0 & 1 & 0 &        & 0 & 0 & 0 & 1 \cr
r_1     & 1 & 0 & 0 & 0 & 1 &        & 0 & 0 & 0 & 0 \cr 
r_2     & 0 & 1 & 1 & 0 & 0 &        & 0 & 0 & 0 & 0 \cr 
r_3     & 0 & 0 & 0 & 1 & 0 & \cdots & 0 & 0 & 0 & 0 \cr 
}. \qquad
 \end{align*}
The variety is cut out by the binomials

\begin{center}
\begin{tabular}{l l l l}
$z_{\emptyset \to 1}z_{1 \to 2} - z_{\emptyset \to 2}z_{2 \to 1}$, & $z_{\emptyset \to 1}z_{1 \to 3} - z_{\emptyset \to 3}z_{3 \to 1}$, & $z_{\emptyset \to 2}z_{2 \to 3} - z_{\emptyset \to 3}z_{3 \to 2}$,  \\
$y_{\emptyset \to 1}z_{1 \to 2} - y_{\emptyset \to 2}z_{2 \to 1}$, & $y_{\emptyset \to 1}z_{1 \to 3} - y_{\emptyset \to 3}z_{3 \to 1}$, & $y_{\emptyset \to 2}z_{2 \to 3} - y_{\emptyset \to 3}z_{3 \to 2}$,  \\
$z_{\emptyset \to 1}y_{1 \to 2} - z_{\emptyset \to 2}y_{2 \to 1}$, & $z_{\emptyset \to 1}y_{1 \to 3} - z_{\emptyset \to 3}y_{3 \to 1}$, & $z_{\emptyset \to 2}y_{2 \to 3} - z_{\emptyset \to 3}y_{3 \to 2}$,  \\
$y_{\emptyset \to 1}y_{1 \to 2} - y_{\emptyset \to 2}y_{2 \to 1}$, & $y_{\emptyset \to 1}y_{1 \to 3} - y_{\emptyset \to 3}y_{3 \to 1}$, & $y_{\emptyset \to 2}y_{2 \to 3} - y_{\emptyset \to 3}y_{3 \to 2}$,  \\
$z_{3 \to 1}z_{13 \to 2} - z_{3 \to 2}z_{23 \to 1}$, & $z_{2 \to 1}z_{12 \to 3} - z_{2 \to 3}z_{23 \to 1}$, & $z_{1 \to 2}z_{12 \to 3} - z_{1 \to 3}z_{13 \to 2}$,  \\
$y_{3 \to 1}z_{13 \to 2} - y_{3 \to 2}z_{23 \to 1}$, & $y_{2 \to 1}z_{12 \to 3} - y_{2 \to 3}z_{23 \to 1}$, & $y_{1 \to 2}z_{12 \to 3} - y_{1 \to 3}z_{13 \to 2}$,  \\
$z_{3 \to 1}y_{13 \to 2} - z_{3 \to 2}y_{23 \to 1}$, & $z_{2 \to 1}y_{12 \to 3} - z_{2 \to 3}y_{23 \to 1}$, & $z_{1 \to 2}y_{12 \to 3} - z_{1 \to 3}y_{13 \to 2}$,  \\
$y_{3 \to 1}y_{13 \to 2} - y_{3 \to 2}y_{23 \to 1}$, & $y_{2 \to 1}y_{12 \to 3} - y_{2 \to 3}y_{23 \to 1}$, & $y_{1 \to 2}y_{12 \to 3} - y_{1 \to 3}y_{13 \to 2}$. 
\end{tabular}
\end{center}

\end{example}

\begin{theorem}
\label{thm:eliminationIdeal}
For all $n\in \mathbb{N}$, $\chick_n = \chick_n^\prime \cap \cc[z]$.
\end{theorem}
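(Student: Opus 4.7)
The plan is to use the primary decomposition $\chick_n' = \bigcap_E P_E$ from Proposition~\ref{lemma:primaryDecomposition} and sandwich $\chick_n' \cap \cc[z]$ between $\chick_n$ (by verifying the containment for every individual $P_E$) and $P_\emptyset \cap \cc[z]$ (via the main toric component). Both bounds will turn out to equal $\chick_n$.

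For the reverse inclusion $\chick_n' \cap \cc[z] \subseteq \chick_n$, since $\chick_n' \subseteq P_\emptyset$, it suffices to identify $P_\emptyset \cap \cc[z]$ with $\chick_n$. Lemma~\ref{lemma:doubledChickeringIdealSaturation} describes $V(P_\emptyset)$ as the Zariski closure of a torus orbit whose projection to the $z$-coordinates is parameterized by $z_{A \to b} = r_{\#A+1}(t_{A \cup b}/t_A)$. The $r$-scaling can be absorbed into a rescaling of the $t$-variables via $t'_A := t_A \prod_{j \leq |A|} r_j$, recovering the standard toric parameterization $z_{A \to b} = t'_{A \cup b}/t'_A$ that cuts out $V(\chick_n)$. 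Since both ideals are toric, hence prime and radical, the Nullstellensatz yields $P_\emptyset \cap \cc[z] = I(V(\chick_n)) = \chick_n$.

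For the forward inclusion $\chick_n \subseteq \chick_n' \cap \cc[z]$, since $\chick_n \subseteq \cc[z]$, it reduces to showing $\chick_n \subseteq P_E$ for every $E \subseteq \mathfrak{B}_n$. Take any binomial $z^u - z^v \in \chick_n$, so $\psi_n(u) = \psi_n(v)$. Summing the identity $\psi_n(u)(S) = \sum_{b \in S} u(S \setminus b \to b) - \sum_{b \notin S} u(S \to b)$ over subsets $S$ of a fixed cardinality $k$ and using a telescoping induction in $k$ (starting at $k = 0$) yields $\sum_{|A|=k} u(A \to b) = \sum_{|A|=k} v(A \to b)$ for every $k$, so $u$ and $v$ have identical multisets of family-set sizes. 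I then show that for $K := \{|F| : F \in E\}$, every variable $z_{A \to b}$ with $|A \cup b| \in K$ lies in $P_E$: starting from the explicit generators $z_{A \to b}$ for $A \cup b \in E$, the defining relation of $R_n$ given by $y_{A' \to b'}z_{A' \cup b' \to c'} - y_{A' \to c'}z_{A' \cup c' \to b'}$ propagates the vanishing to every family with the same family set, and the $P_E$ covered-edge-flip generator $z_{A' \to b'}y_{A' \to c'} - z_{A' \to c'}y_{A' \to b'}$ (available precisely when $A' \cup \{b', c'\} \notin E$) propagates the vanishing to all family sets of the same cardinality, exploiting that the $y$-variables are units. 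Combined with the size-multiset invariance: if $u$ contains a family with family-set cardinality in $K$, then so does $v$, and both monomials lie in $P_E$; otherwise both monomials involve only variables $z_{A \to b}$ with $|A \cup b| \notin K$, and the relation $z^u \equiv z^v$ modulo $P_E$ follows by the same toric computation used for $P_\emptyset$ applied to those free variables. Since $P_E$ is radical, $z^u - z^v \in P_E$.

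Putting the inclusions together gives $\chick_n \subseteq \chick_n' \cap \cc[z] \subseteq P_\emptyset \cap \cc[z] = \chick_n$, hence $\chick_n = \chick_n' \cap \cc[z]$. The main obstacle is the propagation argument of the forward direction: verifying that the explicit generators $\{z_{A \to b} : A \cup b \in E\}$ together with the defining relations of $R_n$ and the restricted $P_E$ covered-edge-flip generators cascade to force vanishing of every variable with family-set cardinality in $K$, while carefully navigating the exclusion of the covered-edge-flip generators precisely when $A \cup \{b, c\} \in E$.
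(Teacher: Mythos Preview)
Your reverse inclusion $\chick_n'\cap\cc[z]\subseteq P_\emptyset\cap\cc[z]=\chick_n$ is correct and matches the paper's argument. The forward inclusion, however, has a genuine gap.

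The key claim---that every variable $z_{A\to b}$ with $|A\cup b|\in K:=\{|F|:F\in E\}$ lies in $P_E$---is false. Take $n=4$ and $E=\{12,34,123,124,134,234\}$, so $K=\{2,3\}$. Your ``type (b)'' propagation move from $z_{A'\to b'}$ to $z_{A'\to c'}$ uses the generator $z_{A'\to b'}y_{A'\to c'}-z_{A'\to c'}y_{A'\to b'}$, available only when $A'\cup\{b',c'\}\notin E$. Here every $3$-subset is in $E$, so every move between size-$2$ family sets is blocked: you cannot leave $\{1,2\}$ or $\{3,4\}$. Concretely, $z_{1\to 3}\notin P_E$: send all $y_{A\to b}\mapsto 1$, all $z_{A\to b}$ with $A\cup b\in E$ to $0$, all $z_{\emptyset\to i}\mapsto 1$, all $z_{A\to b}$ with $A\cup b\in\{13,14,23,24\}$ to some $\alpha\neq 0$, and $z_{A\to b}$ with $A\cup b=1234$ to any $\beta$. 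This is a well-defined point of $V(P_E)\subseteq\operatorname{Spec}R_n$ with $z_{1\to 3}=\alpha\neq 0$. Consequently your case analysis breaks: a monomial $z^u$ can involve variables with family-set size in $K$ without lying in $P_E$, and the ``otherwise'' branch is also not well-justified, since the lattice-basis moves needed to connect $u$ to $v$ couple adjacent family sizes and may require generators with $A\cup\{b,c\}\in E$.

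The paper avoids this by never arguing $\chick_n\subseteq P_E$ individually. Instead it works geometrically: given $z^*\in V(\chick_n'\cap\cc[z])$, it lifts to $(y,z^*)\in V(\chick_n')$, uses the torus action to normalize $y\mapsto 1$, lands in the auxiliary variety $W=V(z_{A\cup b\to c}-z_{A\cup c\to b},\,z_{A\to b}z_{A\cup b\to c}-z_{A\to c}z_{A\cup c\to b})$, shows every point of $W$ is in the torus orbit of a $0/1$-vector, and finally exhibits each $0/1$-point of $W$ as a limit of parameterized points of $V(\chick_n)$. Your size-profile observation $U_k=V_k$ is correct but by itself is too coarse to force membership in each $P_E$.
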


\begin{proof}
Throughout we use the action obtained from the map $\psi_n:\mathbb{R}^{n \cdot 2^{n-1}} \to \mathbb{R}^{2^n}$ from Definition~\ref{def:standardimsets} defined by
\begin{align*}\left(t \, \acts \, (y,z)\right)_{A \to b} = \left( \frac{t_{A \cup b}}{t_A} y_{A \to b}, \frac{t_{A \cup b}}{t_A} z_{A \to b} \right).\end{align*}

By \Cref{lemma:primaryDecomposition} and \Cref{lemma:doubledChickeringIdealSaturation} we know $\chick_n^\prime \cap \cc[z] \subseteq P_\emptyset \cap \cc[z] = \chick_n$. We know that $\chick_n$ is prime and by \Cref{lemma:primaryDecomposition} we know that $\chick_n^\prime$ is radical. Consequently, to prove the reverse inclusion it suffices to show that $V(\chick_n^\prime \cap \cc[z]) \subseteq V(\chick_n)$.

Let $z^* \in V(\chick_n^\prime \cap \cc[z])$. Then there exists $y \in \left(\cc^*\right)^{n \cdot 2^{n-1}}$ such that $(y,z^*) \in V(\chick_n^\prime)$. Since $y \in V(\chick_n) \cap \left(\cc^*\right)^{n \cdot 2^{n-1}}$, we know that there exist $t \in \left(\cc^*\right)^{2^n}$  and $z \in \cc^{n \cdot 2^{n-1}}$ such that $t \, \acts \,(y,z^*) = (1,z)$. By \Cref{prop:closedUnderTorusAction} we have
\begin{align*}z \in W \coloneqq V(z_{A \cup b \to c} - z_{A \cup c \to b}, z_{A \to b}z_{A \cup b\to c} - z_{A \to c}z_{A \cup c \to b}: b \neq c \in [n], A \subseteq [n] \setminus \{b,c\}).\end{align*}
To show that $V(\chick_n^\prime \cap \cc[z]) \subseteq V(\chick_n)$ it suffices to show that $W \subseteq V(\chick_n)$ since $V(\chick_n^\prime \cap \cc[z])$ is the closure of $W$ under the torus action. To do this we will show the following two claims:

\begin{enumerate}[label=\textbf{Claim \arabic*.}, leftmargin=*, align=left]
    \item  Every $z \in W$ is in the torus orbit of some $0/1$-vector in $W$ and
    \item $\{0,1\}^{n \cdot 2^{n-1}} \cap W \subseteq V(\chick_n)$.
\end{enumerate}
For Claim 1, let $z \in W$. It suffices to construct a sequence
\begin{align*}z=:z^{(0)}, z^{(1)},\dots,z^{(n)}\end{align*}
such that for some $t^{(1)},\dots,t^{(n)} \in \left(\cc^*\right)^{2^n}$ we have that for all $k \in [n]$
\begin{enumerate}[label=\textbf{Subclaim \arabic*.}, leftmargin=*, align=left]
    \item $t^{(k)} \, \acts \, z^{(k-1)} = z^{(k)}$, 
    \item $z_{A \to b}^{(k)} \in \{0,1\}$ if $\#A < k$, and
    \item $z^{(k)} \in W$.
\end{enumerate}
Let $z^{(0)}\coloneqq z$. We do not need to consider Subclaim 1 for the base case since there is no preceding $z^{(k)}$. Subclaim 2 vacuously holds since $\#A$ cannot be negative for any family. Lastly, for Subclaim 3 we know that $z^{(0)} \in W$ since $z \in W$ by assumption.

We inductively define
\begin{align*}
t^{(k)}_S \coloneqq \begin{cases} \left(z_{S \setminus b \to b}^{(k-1)} \right)^{-1} &\mbox{if}~ \#S = k ~\mbox{and}~ z_{S \setminus b \to b}^{(k-1)} \neq 0 \\ 1 &\mbox{otherwise}\end{cases} \quad \mbox{and} \quad z^{(k)}\coloneqq t^{(k)} \, \acts \, z^{(k-1)}.
\end{align*}
By induction we know $z^{(k-1)} \in W$ and so $z_{S \setminus b \to b}^{(k-1)}$ is independent of the choice of $b \in S$. It follows that $t^{(k)}$ is well-defined. Thus Subclaim 1 holds for $z^{(k)}$ and $z^{(k)}$ is well-defined.

For Subclaim 2, first note that $z^{(k)}_{A \to b} = z^{(k-1)}_{A \to b}$ whenever $\#A \not\in \{k-1,k\}$ since $t^{(k)}$ only changes $z^{(k-1)}_{A \to b}$ when $\#A \in \{k-1,k\}$. Consequently we only need to check that $z_{A \to b}^{(k)} \in \{0,1\}$ when $\#A = k-1$. In this case
\begin{align*}
z_{A \to b}^{(k)} = \frac{t_{A \cup b}^{(k)}}{t_A^{(k)}} z_{A \to b}^{(k-1)} = t_{A \cup b}^{(k)} z_{A \to b}^{(k-1)} = \begin{cases} 1 &\mbox{if}~z_{A \to b}^{(k-1)} \neq 0, \\ 0 &\mbox{otherwise}. \end{cases}\end{align*}

For Subclaim 3, we know that $V(z_{A \to b}z_{A \cup b \to c} - z_{A \to c}z_{A \cup c \to b}: b \neq c \in [n], A \subseteq [n] \setminus \{b,c\})$ is closed under the torus action, so we only need to check the constraints of the form $z_{A \cup b \to c}^{(k)} = z_{A \cup c \to b}^{(k)}$. When $\#A \not\in \{k-2,k-1\}$, $z_{A \cup b \to c}^{(k)} = z_{A \cup b \to c}^{(k-1)} =z_{A \cup c \to b}^{(k-1)} = z_{A \cup c \to b}^{(k)}$.

When $\#A = k-2$, we know $z_{A \cup b \to c}^{(k)} \in \{0,1\}$ by Subclaim 2 and the value is determined by the sparsity of $z^{(k-1)}$. Hence $z_{A \cup b \to c}^{(k)} = z_{A \cup c \to b}^{(k)}$.

Lastly, we consider the case of $\#A = k-1$. If $z_{A \cup b \to c}^{(k-1)} = z_{A \cup c \to b}^{(k-1)} = 0$, then $z_{A \cup b \to c}^{(k)} = z_{A \cup c \to b}^{(k)} = 0$. Otherwise $z_{A \cup b \to c}^{(k-1)} = z_{A \cup c \to b}^{(k-1)} \neq 0$, and so $z_{A \to b}^{(k-1)} = z_{A \to c}^{(k-1)}$. If $z_{A \to b}^{(k-1)}= 0$, then 
\begin{align*}
z_{A \cup b \to c}^{(k)} = \frac{t_{A \cup b \cup c}^{(k)}}{t_{A \cup b}^{(k)}} z_{A \cup b \to c}^{(k-1)} = z_{A \cup b \to c}^{(k-1)} = z_{A \cup c \to b}^{(k-1)} = \frac{t_{A \cup b \cup c}^{(k)}}{t_{A \cup c}^{(k)}} z_{A \cup c \to b}^{(k-1)} = z_{A \cup c \to b}^{(k)}.
\end{align*}
Otherwise $z_{A \to b}^{(k-1)} \neq 0$, and 
\begin{align*}z_{A \cup b \to c}^{(k)} = \frac{t_{A \cup b \cup c}^{(k)}}{t_{A \cup b}^{(k)}} z_{A \cup b \to c}^{(k-1)} = z_{A \to b}^{(k-1)} z_{A \cup b \to c}^{(k-1)} = z_{A \to c}^{(k-1)} z_{A \cup c \to b}^{(k-1)} = \frac{t_{A \cup b \cup c}^{(k)}}{t_{A \cup c}^{(k)}} z_{A \cup c \to b}^{(k-1)} = z_{A \cup c \to b}^{(k)}.\end{align*}
Hence $z^{(k)} \in W$, and in particular $z^{(n)} \in W \cap \{0,1\}^{n \cdot 2^{n-1}}$.

We now show Claim 2. Throughout the remainder of the proof, we assume $z \in W \cap \{0,1\}^{n \cdot 2^{n-1}}$. We will find a path to $z$ through the parameterized part of $V(\chick_n)$. Let $\tau$ be a parameter taking on values in $(0,1]$. Define
\begin{align*}
t_\emptyset \coloneqq 1 \quad \mbox{and} \quad t_{A \cup b} \coloneqq \begin{cases} t_A &\mbox{if}~ z_{A \to b} = 1, \\ \tau^{\#A+1}&\mbox{if}~z_{A \to b} = 0 .\end{cases}
\end{align*}
We must show that $t$ is well-defined and that $\lim\limits_{\tau \to 0^+} \frac{t_{A \cup b}}{t_A} = z_{A \to b}$.

We first show that $t$ is well-defined. For the base case, let $b \in [n]$. Clearly $t_b$ is well-defined since there is only one choice of distinguished element in $\{b\}$. For the induction step, suppose that for all $A \to b$ with $\#A < k-1$, we know $t_{A \cup b}$ is well-defined. Now fix $ b \neq c \in [n]$ and $A \subseteq [n] \setminus \{b,c\}$ with $\#A = k-2$.

If $z_{A \cup b \to c}=0$, then $z_{A \cup c \to b} = 0$ since $z \in W$. Thus the definition of $t_{A \cup b \cup c} \coloneqq \tau^{\#A+2}$ is independent of any distinguished element in the indexing set. Otherwise $z_{A \cup b \to c} = 1$. Since $z_{A \cup b \to c} = z_{A \cup c \to b}$ we obtain two definitions of $t_{A \cup b \cup c}$:
\begin{align*}t_{A \cup b \cup c}\coloneqq t_{A \cup b} \quad \mbox{and} \quad t_{A \cup b \cup c} \coloneqq t_{A \cup c}.\end{align*}
Since $z_{A \cup b \to c}(z_{A \to b} - z_{A \to c}) = 0$ and $z_{A \cup b \to c} \neq 0$, we know $z_{A \to b} = z_{A \to c}$ and so $t_{A \cup b} = t_{A \cup c}$. It follows that $t$ is well-defined.

We now compute the limit. For some $\alpha > 0$, 
\begin{align*}\lim\limits_{\tau \to 0^+} \frac{t_{A \cup b \cup c}}{t_{A \cup b}} = \lim\limits_{\tau \to 0^+}\left( \begin{cases} 1 &\mbox{if}~ z_{A \cup b \to c} = 1 \\ \tau^\alpha &\mbox{if}~ z_{A \cup b \to c} = 0 \end{cases} \right) = z_{A \cup b \to c}.\end{align*}
Since there is a path in $V(\chick_n)$ with $z$ as a limit point, we have $z \in V(\chick_n)$. It follows that $W \subseteq V(\chick_n)$ and $V(\chick_n^\prime \cap \cc[z]) \subseteq V(\chick_n)$.
\end{proof}

\section{Covariance Equivalence}
\label{sec:matrixMoves}

In this section we prove the main theorem of this article, which is the following result.

\begin{theorem} \label{thm: main}
    Let $\mathcal{G}$ and $\mathcal{H}$ be two directed graphs on the vertex set $[n]$ such that $\cimv_\mathcal{G} = \cimv_\mathcal{H}$. Then $\mathcal{G}$ and $\mathcal{H}$ are covariance equivalent. 
\end{theorem}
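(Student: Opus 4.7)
My plan is to work at the level of the parameterization of precision matrices. By \Cref{prop:sparseCholeskyFactor}, the set of precision matrices of random vectors in $\cm(\cg)$ is parameterized by $Q \mapsto QQ^T$, where $Q$ is full-rank, has nonzero diagonal, and has column-$j$ support equal to $\fa_\cg(j)$. By \Cref{rem:unitaryTransformations}, the closure containment required for covariance equivalence is equivalent to the claim that for every such $Q$ associated with $\cg$, there exists an orthogonal matrix $U$ such that $QU$ lies in the Euclidean closure of matrices with the analogous sparsity prescribed by $\ch$; the reverse inclusion will follow by symmetry.

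The construction of $U$ is guided by the algebra of \Cref{sec:chickeringIdeal}. The hypothesis $c_\cg = c_\ch$ says $v_\cg - v_\ch \in \ker \varphi_n$, so the graph monomials $z_\cg$ and $z_\ch$ agree modulo $\chick_n$. By \Cref{thm:eliminationIdeal} this identity lifts to the doubled ideal $\chick_n'$, where the invertible $y$-variables supply additional formal parameters. Each lifted relation $y_{A \to b}\, z_{A \cup b \to c} = y_{A \to c}\, z_{A \cup c \to b}$ is to be interpreted as a two-dimensional rotation in the $(b,c)$-plane of $Q$: rotating the column of $Q$ indexed by $A \cup \{b\}$ against the column indexed by $A \cup \{c\}$ by an angle read off from the $y$-values exchanges support between the families $A \to b,\, A \cup b \to c$ and $A \to c,\, A \cup c \to b$, exactly as in a covered edge flip (\Cref{fig:coveredEdgeFlip}).

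I would first verify the acyclic case, where each kernel generator from \Cref{lemma:quotientMap} corresponds to a single Givens rotation that exactly realizes a covered edge flip. I would then handle cycle reversals, such as the one in \Cref{fig:equivalentWithDifferentSkeletons}, by composing several Givens rotations, using the parameterized path $\tau \mapsto t(\tau)$ constructed in the proof of \Cref{thm:eliminationIdeal} to realize these compositions as $\tau \to 0^+$ limits of invertible orthogonal matrices. The main obstacle is that in the cyclic case the covered-edge-flip decomposition of $v_\cg - v_\ch$ inevitably passes through intermediate family variable vectors that do not correspond to any directed graph, so the rotation chain cannot be interpreted step-by-step as moves between honest sparse matrices. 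The doubled ideal $\chick_n'$ is built to accommodate exactly this: we perform the entire rotation chain in the larger space parameterized by $(y,z)$, and then take a limit along a curve in $V(\chick_n')$ (mirroring the $\tau \to 0^+$ argument in \Cref{thm:eliminationIdeal}) to recover a legitimate orthogonal matrix $U$ that transforms $Q$ into the limit of matrices with the sparsity prescribed by $\ch$, giving the desired containment of Euclidean closures.
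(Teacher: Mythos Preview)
Your high-level strategy matches the paper's: use \Cref{thm:eliminationIdeal} to write $z_\cg - z_\ch$ as a combination of the generators of $\chick_n'$, and interpret each generator as an operation on a matrix $Q$ with labeled columns. But your assignment of operations to binomials is off, and this leads you to introduce a limit argument that is not needed.

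In the paper's proof the three generator types act as follows: the pure $y$-binomial does nothing to $Q$ (it only updates the bookkeeping monomial $m$); the mixed binomial $y_{A\to b}z_{A\cup b\to c}-y_{A\to c}z_{A\cup c\to b}$ merely \emph{relabels} a column from $A\cup b\to c$ to $A\cup c\to b$, which is legal because these two families have the same support $A\cup\{b,c\}$; and only the pure $z$-binomial $z_{A\to b}z_{A\cup b\to c}-z_{A\to c}z_{A\cup c\to b}$ triggers an actual Givens rotation (this is the proper rotation of \cite[Proposition~3]{pmlr-v119-ghassami20a}). You have the rotation attached to the mixed binomial and read an angle from the $y$-values, which is not how the argument runs.

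Because the mixed binomials are only relabelings, the chain of moves \emph{can} be interpreted step by step: at each stage you have an honest matrix together with a column-labeling by families whose supports match the column supports; those families need not assemble into a graph, but that is irrelevant for the sparsity bookkeeping. The sequence of Givens rotations is therefore an exact orthogonal matrix $U$ with $QU$ having the $\ch$-sparsity generically, and no limit is taken. The $\tau\to 0^+$ path you invoke lives entirely inside the proof of \Cref{thm:eliminationIdeal} (to show $W\subseteq V(\chick_n)$); it plays no role once that theorem is in hand.
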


\begin{proof}
Let $\cg, \ch$ be directed graphs with $c_\cg = c_\ch$. By Definition~\ref{def:ChickeringIdeal}, $z_\cg - z_\ch \in \chick_n$ and then by Theorem~\ref{thm:eliminationIdeal}, $z_\cg - z_\ch \in \chick_n^\prime \cap \cc[z]$. It follows that $z_\cg$ can be transformed into $z_\ch$ via the binomials
\begin{equation}
\label{eq:binomialMoves}
1-\frac{y_{A \to c}y_{A \cup c \to b}}{y_{A \to b}y_{A \cup b \to c}}, \quad\quad z_{A \cup b \to c} -\frac{y_{A \to c}}{y_{A \to b}}z_{A \cup c \to b}, \quad\quad z_{A \to b}z_{A \cup b \to c} - z_{A \to c}z_{A \cup c \to b}
\end{equation}
for $b\neq c \in [n]$ and sets $A \subseteq [n] \setminus \{b,c\}$. In other words, 
\begin{align} \label{eq: binomial in doubled chickering}   
    z_\mathcal{G} - z_\mathcal{H} = \sum_{i=1}^L  z^{p_i} y^{q_i} (z^{u_i^+} - z^{u_i^-}y^{v_i}),
  \end{align}
where for all $i\in L$, $p_i,u_i^+,u_i^- \in (\mathbb{Z}_{\geq 0})^{n\cdot {2^{n-1}}}$, $q_i,v_i \in \mathbb{Z}^{n\cdot {2^{n-1}}}$, $z^{u_i^+} - z^{u_i^-}y^{v_i}$ is a binomial of one of the three forms in~\Cref{eq:binomialMoves}, and for all $i\in [L-1]$, $p_i+u_i^{-}= p_{i+1}+ u_{i+1}^+$ and $q_i + v_i = q_{i+1}$.

Let $Q \in \mathbb{R}^{n \times n}$ be a matrix whose column $i$ is labeled by the family $\fa_\cg(i)$, and whose column labels agree with the sparsity of the corresponding column. We associate to $z_\cg$ the pair $(1, Q)$  where $1 \in \cc[y^\pm]$. In \Cref{eq: binomial in doubled chickering}, each successive partial sum adds an extra binomial that is a step in transforming $z_\cg$ into $z_\ch$.
By Remark~\ref{rem:unitaryTransformations} it remains to show that the binomials in Equation~\eqref{eq:binomialMoves} 
induce successive orthogonal transformations on $Q$ that preserve the property that column labels of the matrix obtained in each step agree (generically) with the sparsity of the matrix and the exponent vector of the current modification of $z_\cg$ under the partial sum.

The choice of transformation in step $i$ depends on whether the binomial $z^{u_i^+} - z^{u_i^-}y^{v_i}$ is of the first, second, or third form in~\Cref{eq:binomialMoves}. In particular, given a pair $(m, M)$ of monomial $m\in \mathbb{C}[y^\pm]$ and matrix $M \in \mathbb{R}^{n\times n}$, the first binomial acts by
\begin{align*}(m,M) \mapsto \left(m \cdot  \frac{y_{A \to c}y_{A \cup c \to b}}{y_{A \to b}y_{A \cup b \to c}}, M\right)\end{align*}
and column labels remain unchanged. Clearly the column labels still respect the sparsity since the matrix is unchanged. The second binomial acts by
\begin{align*}(m,M) \mapsto \left(m \cdot  \frac{y_{A \to c}}{y_{A \to b}}, M\right)\end{align*}
and the column of $M$ labeled $A \cup b \to c$ is relabeled $A \cup c \to b$. Since these two families have the same support, the column labeling still agrees with the sparsity. For the third binomial,
\begin{align*}(m,M) \mapsto (m, MU),\end{align*}
where $U$ is a proper Givens transformation given by \cite[Definition~6]{pmlr-v119-ghassami20a}. The sparsity claim is shown in \cite[Proposition~3]{pmlr-v119-ghassami20a}.

Consequently, the binomials that transform $z_\cg$ into $z_\ch$ induce a sequence of orthogonal transformations that map $(1, Q)$, where $Q$ has sparsity determined by $\cg$, to some $(1, Q^\prime)$, where $Q^\prime$ has sparsity determined by $\ch$. Hence, by \Cref{rem:unitaryTransformations}, $QQ^T = Q'{Q'}^T$ is in the Euclidean closure of the set of precision matrices of random vectors in $\cm(\ch)$.
\end{proof}

\begin{example}
    Consider the directed cycles $\cg$ and $\ch$ in \Cref{fig:cycle reversal example}. Then 
    \begin{align*}
        y_{\emptyset \to 1}\left(z_\cg - z_\ch\right) &= z_{2 \to 3}z_{3 \to 1}\left(y_{\emptyset \to 1}z_{1 \to 2} - y_{\emptyset \to 2}z_{2 \to 1} \right) \\
        &+ z_{3 \to 1}z_{2 \to 1} \left(y_{\emptyset \to 2}z_{2 \to 3} - y_{\emptyset \to 3}z_{3 \to 2} \right) \\
        &+ z_{2 \to 1}z_{3 \to 2}\left(y_{\emptyset \to 3}z_{3 \to 1} - y_{\emptyset \to 1} z_{1 \to 3} \right),
    \end{align*}
    and so $y_{\emptyset \to 1} \left(z_\cg - z_\ch \right) \in \chick_n^\prime$. Since $y_{\emptyset \to 1}$ is invertible, $z_\cg - z_\ch \in \chick_n^\prime$. Figure~\ref{fig:cycle reversal example} illustrates how each of these binomials represents a change on the matrix $Q$. In this figure, the boxes represent the distinguished child in each column's associated family.

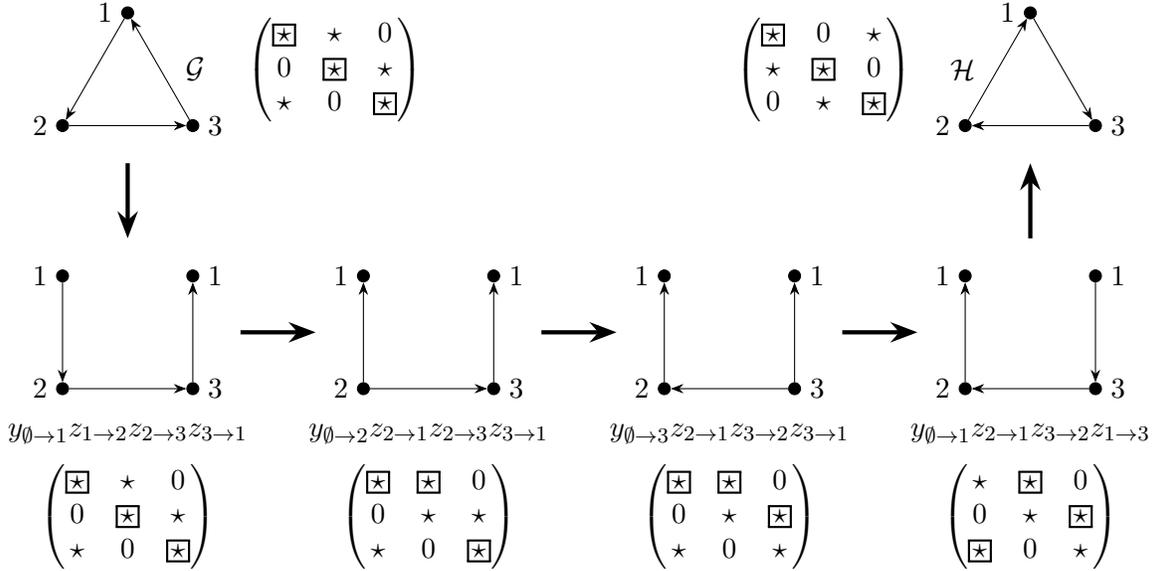
\begin{figure}
    \centering    
    \begin{tikzpicture}[>=Stealth, shorten >=0pt, shorten <=0pt, scale=1]

\def\side{1.5}
\def\sep{3} 
\pgfmathsetmacro{\h}{sqrt(3)/2} 

\begin{scope}[shift={(0,3.5)}]
\node at (0.9, 0.25) {$\mathcal{G}$}; 

\node[wB] (X1) at (0, 1) {};
\node[wB] (X2) at (-\h, -0.5) {};
\node[wB] (X3) at (\h, -0.5) {};

\node at (-0.3, 1) {1};
\node at (-\h - 0.3, -0.5) {2};
\node at (\h + 0.3, -0.5) {3};

\draw[->] (X1) -- (X2);
\draw[->] (X2) -- (X3);
\draw[->] (X3) -- (X1);

\node at (2.75,0.25) {$\begin{pmatrix}
    \boxedstar & \star &     0 \\
        0 & \boxedstar & \star \\
    \star &     0 & \boxedstar \\
\end{pmatrix}$};
\end{scope}

\begin{scope}[shift={(0,0)}]
\draw [ultra thick, -Stealth] (0, 2.5) -- (0,1.5);

\node[wB] (X1) at (-\h, 1) {};
\node[wB] (X2) at (-\h, -0.5) {};
\node[wB] (X3) at (\h, -0.5) {};
\node[wB] (X4) at (\h, 1) {};

\node at (-\h - 0.3, 1) {1};
\node at (-\h - 0.3, -0.5) {2};
\node at (\h + 0.3, -0.5) {3};
\node at (\h + 0.3, 1) {1};

\draw[->] (X1) -- (X2);
\draw[->] (X2) -- (X3);
\draw[->] (X3) -- (X4);

\node at (0,-1.1) {$y_{\emptyset \to 1}z_{1 \to 2}z_{2 \to 3}z_{3 \to 1}$};

\node at (0,-2.2) {$\begin{pmatrix}
    \boxedstar & \star &     0 \\
        0 & \boxedstar & \star \\
    \star &     0 & \boxedstar \\
\end{pmatrix}$};
\end{scope}

\begin{scope}[shift={(4,0)}]
\draw [ultra thick, -Stealth] (-2.5, 0.25) -- (-1.5,0.25);

\node[wB] (X1) at (-\h, 1) {};
\node[wB] (X2) at (-\h, -0.5) {};
\node[wB] (X3) at (\h, -0.5) {};
\node[wB] (X4) at (\h, 1) {};

\node at (-\h - 0.3, 1) {1};
\node at (-\h - 0.3, -0.5) {2};
\node at (\h + 0.3, -0.5) {3};
\node at (\h + 0.3, 1) {1};

\draw[->] (X2) -- (X1);
\draw[->] (X2) -- (X3);
\draw[->] (X3) -- (X4);

\node at (0,-1.1) {$y_{\emptyset \to 2}z_{2 \to 1}z_{2 \to 3}z_{3 \to 1}$};

\node at (0,-2.2) {$\begin{pmatrix}
    \boxedstar & \boxedstar &     0 \\
        0 & \star & \star \\
    \star &     0 & \boxedstar \\
\end{pmatrix}$};
\end{scope}

\begin{scope}[shift={(8,0)}]
\draw [ultra thick, -Stealth] (-2.5, 0.25) -- (-1.5,0.25);

\node[wB] (X1) at (-\h, 1) {};
\node[wB] (X2) at (-\h, -0.5) {};
\node[wB] (X3) at (\h, -0.5) {};
\node[wB] (X4) at (\h, 1) {};

\node at (-\h - 0.3, 1) {1};
\node at (-\h - 0.3, -0.5) {2};
\node at (\h + 0.3, -0.5) {3};
\node at (\h + 0.3, 1) {1};

\draw[->] (X2) -- (X1);
\draw[->] (X3) -- (X2);
\draw[->] (X3) -- (X4);

\node at (0,-1.1) {$y_{\emptyset \to 3}z_{2 \to 1}z_{3 \to 2}z_{3 \to 1}$};

\node at (0,-2.2) {$\begin{pmatrix}
    \boxedstar & \boxedstar &     0 \\
        0 & \star & \boxedstar \\
    \star &     0 & \star \\
\end{pmatrix}$};
\end{scope}

\begin{scope}[shift={(12,0)}]
\draw [ultra thick, -Stealth] (0, 1.5) -- (0,2.5);

\draw [ultra thick, -Stealth] (-2.5, 0.25) -- (-1.5,0.25);

\node[wB] (X1) at (-\h, 1) {};
\node[wB] (X2) at (-\h, -0.5) {};
\node[wB] (X3) at (\h, -0.5) {};
\node[wB] (X4) at (\h, 1) {};

\node at (-\h - 0.3, 1) {1};
\node at (-\h - 0.3, -0.5) {2};
\node at (\h + 0.3, -0.5) {3};
\node at (\h + 0.3, 1) {1};

\draw[->] (X2) -- (X1);
\draw[->] (X3) -- (X2);
\draw[->] (X4) -- (X3);

\node at (0,-1.1) {$y_{\emptyset \to 1}z_{2 \to 1}z_{3 \to 2}z_{1 \to 3}$};

\node at (0,-2.2) {$\begin{pmatrix}
    \star & \boxedstar &     0 \\
        0 & \star & \boxedstar \\
    \boxedstar &     0 & \star \\
\end{pmatrix}$};
\end{scope}

\begin{scope}[shift={(12,3.5)}]
\node at (-0.9, 0.25) {$\mathcal{H}$}; 

\node[wB] (X1) at (0, 1) {};
\node[wB] (X2) at (-\h, -0.5) {};
\node[wB] (X3) at (\h, -0.5) {};

\node at (-0.3, 1) {1};
\node at (-\h - 0.3, -0.5) {2};
\node at (\h + 0.3, -0.5) {3};

\draw[->] (X1) -- (X3);
\draw[->] (X2) -- (X1);
\draw[->] (X3) -- (X2);

\node at (-2.75,0.25) {$\begin{pmatrix}
    \boxedstar & 0 &     \star \\
    \star & \boxedstar & 0 \\
        0 &     \star & \boxedstar \\
\end{pmatrix}$};
\end{scope}

\end{tikzpicture}
    \caption{Two covariance equivalent graphs related by reversing a cycle. Boxed stars represent the distinguished child in the family that indexes the column.}
    \label{fig:cycle reversal example}
\end{figure}
\end{example}

\begin{remark}
    One might hope that imset equivalence and covariance equivalence are the same relations when restricted to graphs that have the same skeleton. This would be a partial converse to Theorem~\ref{thm: main}. However, we see in the following example that this is not the case. As a consequence, imset equivalence classes not only refine covariance equivalence classes, but also strictly refine the intersection of the covariance equivalence and skeletal equivalence relations.
\end{remark}

\begin{example}
\label{ex:converseFails}
Let $\cg$ be the graph on the left in Figure~\ref{fig:converseCounterexample} and $\ch$ be the graph on the right. Figure~\ref{fig:converseCounterexample} shows that $\cg$ and $\ch$ are related by first reversing the 135-cycle (in red) and then reversing the 246-cycle (in blue), where cycle reversal is done in the sense of~\cite{pmlr-v119-ghassami20a}. Hence $\cg$ and $\ch$ are covariance equivalent by \cite{pmlr-v119-ghassami20a}. However
\begin{align*}c_\cg(235) = 0 \neq 1 = c_\ch(235),\end{align*}
and so $\cg$ and $\ch$ are not imset equivalent. In short, imset equivalent graphs must have the same 3-cycles but covariance equivalent graphs do not (even if they have the same skeleton). Up to isomorphism, this is the unique pair of 2-cycle free graphs on at most 6 vertices which are covariance equivalent and not imset equivalent.
\begin{figure}
    \centering
\begin{tikzpicture}[scale=1.5]

\begin{scope}[shift={(0,0)}]
\node at (1.2,0) {\small 1 \normalsize};
\node at (0.6,1.039) {\small 2 \normalsize};
\node at (-0.6,1.039) {\small 3 \normalsize};
\node at (-1.2,0) {\small 4 \normalsize};
\node at (-0.6,-1.039) {\small 5 \normalsize};
\node at (0.6,-1.039) {\small 6 \normalsize};

\draw[-Stealth, shorten >=3pt,black] (1,0) to (0.5,0.866);
\draw[-Stealth, shorten >=3pt,black] (0.5,0.866) to (-0.5,0.866);
\draw[-Stealth, shorten >=3pt,black] (-0.5,0.866) to (-1,0);
\draw[-Stealth, shorten >=3pt,black] (1,0) to (0.5,-0.866);
\draw[-Stealth, shorten >=3pt,black] (0.5,-0.866) to (-0.5,-0.866);
\draw[-Stealth, shorten >=3pt,black] (-0.5,-0.866) to (-1,0);

\draw[-Stealth, shorten >=3pt,blue,thick] (0.5,0.866)--(-1,0);
\draw[-Stealth, shorten >=3pt,blue,thick] (-1,0)--(0.5,-0.866);
\draw[-Stealth, shorten >=3pt,blue,thick] (0.5,-0.866)--(0.5,0.866);

\draw[-Stealth, shorten >=3pt,red,thick] (1,0)--(-0.5,0.866);
\draw[-Stealth, shorten >=3pt,red,thick] (-0.5,0.866)--(-0.5,-0.866);
\draw[-Stealth, shorten >=3pt,red,thick] (-0.5,-0.866)--(1,0);

\draw[-Stealth, shorten >=3pt,black] (-1,0)--(1,0);
\draw[-Stealth, shorten >=3pt,black] (-0.5,0.866)--(0.5,-0.866);
\draw[-Stealth, shorten >=3pt,black] (-0.5,-0.866)--(0.5,0.866);

\node[wB] at (1,0) {};
\node[wB] at (0.5,0.866) {};
\node[wB] at (-0.5,0.866) {};
\node[wB] at (-1,0) {};
\node[wB] at (-0.5,-0.866) {};
\node[wB] at (0.5,-0.866) {};
\end{scope}

\draw[-Stealth,ultra thick] (1.5,0)--(2.5,0);

\begin{scope}[shift={(4,0)}]
\node at (1.2,0) {\small 1 \normalsize};
\node at (0.6,1.039) {\small 2 \normalsize};
\node at (-0.6,1.039) {\small 3 \normalsize};
\node at (-1.2,0) {\small 4 \normalsize};
\node at (-0.6,-1.039) {\small 5 \normalsize};
\node at (0.6,-1.039) {\small 6 \normalsize};

\draw[-Stealth, bend left=10, shorten >=3pt,black] (1,0) to (0.5,0.866);
\draw[-Stealth, bend left=10, shorten >=3pt,black] (0.5,0.866) to (1,0);

\draw[-Stealth, bend left=10, shorten >=3pt,black] (0.5,-0.866) to (-0.5,0.866);
\draw[-Stealth, bend left=10, shorten >=3pt,black] (-0.5,0.866) to (0.5,-0.866);

\draw[-Stealth, bend left=10, shorten >=3pt,black] (-0.5,-0.866) to (-1,0);
\draw[-Stealth, bend left=10, shorten >=3pt,black] (-1,0) to (-0.5,-0.866);

\draw[-Stealth, shorten >=3pt,blue,thick] (0.5,0.866)--(-1,0);
\draw[-Stealth, shorten >=3pt,blue,thick] (-1,0)--(0.5,-0.866);
\draw[-Stealth, shorten >=3pt,blue,thick] (0.5,-0.866)--(0.5,0.866);

\draw[-Stealth, shorten >=3pt,red,thick] (-0.5,0.866)--(1,0);
\draw[-Stealth, shorten >=3pt,red,thick] (-0.5,-0.866)--(-0.5,0.866);
\draw[-Stealth, shorten >=3pt,red,thick] (1,0)--(-0.5,-0.866);

\draw[-Stealth, shorten >=3pt,black] (-0.5,0.866)--(-1,0);
\draw[-Stealth, shorten >=3pt,black] (1,0)--(0.5,-0.866);
\draw[-Stealth, shorten >=3pt,black] (-0.5,-0.866)--(0.5,0.866);

\node[wB] at (1,0) {};
\node[wB] at (0.5,0.866) {};
\node[wB] at (-0.5,0.866) {};
\node[wB] at (-1,0) {};
\node[wB] at (-0.5,-0.866) {};
\node[wB] at (0.5,-0.866) {};
\end{scope}

\draw[-Stealth,ultra thick] (5.5,0)--(6.5,0);

\begin{scope}[shift={(8,0)}]
\node at (1.2,0) {\small 1 \normalsize};
\node at (0.6,1.039) {\small 2 \normalsize};
\node at (-0.6,1.039) {\small 3 \normalsize};
\node at (-1.2,0) {\small 4 \normalsize};
\node at (-0.6,-1.039) {\small 5 \normalsize};
\node at (0.6,-1.039) {\small 6 \normalsize};

\draw[-Stealth, shorten >=3pt,black] (0.5,0.866) to (1,0);
\draw[-Stealth, shorten >=3pt,black] (-0.5,0.866) to (0.5,0.866);
\draw[-Stealth, shorten >=3pt,black] (-0.5,0.866) to (-1,0);
\draw[-Stealth, shorten >=3pt,black] (1,0) to (0.5,-0.866);
\draw[-Stealth, shorten >=3pt,black] (-0.5,-0.866) to (0.5,-0.866);
\draw[-Stealth, shorten >=3pt,black] (-1,0)--(-0.5,-0.866);

\draw[-Stealth, shorten >=3pt,blue,thick] (-1,0)--(0.5,0.866);
\draw[-Stealth, shorten >=3pt,blue,thick] (0.5,-0.866)--(-1,0);
\draw[-Stealth, shorten >=3pt,blue,thick] (0.5,0.866)--(0.5,-0.866);

\draw[-Stealth, shorten >=3pt,red,thick] (-0.5,0.866)--(1,0);
\draw[-Stealth, shorten >=3pt,red,thick] (-0.5,-0.866)--(-0.5,0.866);
\draw[-Stealth, shorten >=3pt,red,thick] (1,0)--(-0.5,-0.866);

\draw[-Stealth, shorten >=3pt,black] (1,0)--(-1,0);
\draw[-Stealth, shorten >=3pt,black] (0.5,-0.866)--(-0.5,0.866);
\draw[-Stealth, shorten >=3pt,black] (-0.5,-0.866)--(0.5,0.866);

\node[wB] at (1,0) {};
\node[wB] at (0.5,0.866) {};
\node[wB] at (-0.5,0.866) {};
\node[wB] at (-1,0) {};
\node[wB] at (-0.5,-0.866) {};
\node[wB] at (0.5,-0.866) {};
\end{scope}

\end{tikzpicture}
    \caption{Two covariance equivalent, 2-cycle free graphs with different characteristic imset vectors.}
    \label{fig:converseCounterexample}
\end{figure}
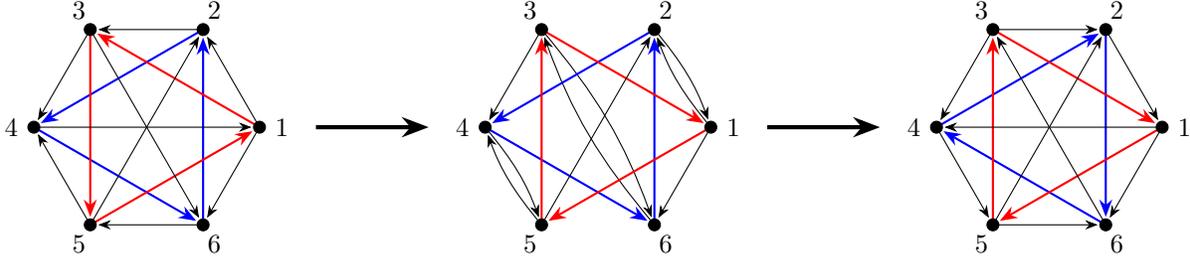
\end{example}

\section{Future Directions}
\label{sec:futureDirections}

In this section we discuss some potential directions for future study.

\subsection{Causal Discovery Algorithm} Theorem~\ref{thm: main} states that two graphs with the same characteristic (or standard) imset vector are covariance equivalent, and therefore, model equivalent when the noise is Gaussian. As a consequence, greedy search approaches to causal discovery can search over the set of standard imset vectors instead of the set of all directed (possibly cyclic) graphs. This could yield faster search algorithms as it avoids redundant scoring of multiple graphs within the same imset equivalence class. See~\cite{pmlr-v119-ghassami20a, pmlr-v124-amendola20a} for scoring methods and search algorithms in the cyclic and Gaussian settings. Notably, to search over imsets instead of graphs, one needs a way to recover a graph in the fiber of a standard imset vector.

\begin{question*}
Given a vector $s \in \mathbb{Z}^{2^n}$ with at most $2n$ nonzero coordinates, decide in polynomial time if there is a graph $\cg$ such that $s = \simv_\cg$. How can one recover $\cg$ (or any other graph in the associated fiber) from $\simv_\cg$? 
\end{question*}

Difficulties arise from searching over characteristic imset vectors since they require exponential space in the number of random variables. However, standard imset vectors can be stored in far less space since at most $2n$ of the $2^n$ coordinates are nonzero for a graph on $n$ vertices. Additionally, since graphs in the same imset equivalence class have the same skeleton, it would be interesting to investigate notions of patterns and essential graphs, which could address issues of space in a way that aligns more with the traditional literature.

\subsection{Polyhedral Structure of Imsets for Cyclic Graphs}

Score-based approaches to causal discovery often require a set of combinatorial moves that transform one graph or model equivalence class into another. For instance, in Greedy Equivalence Search (GES) the moves required to learn an acyclic graph are edge additions, edge deletions, and edge reversals. Such moves can be realized as edges of the characteristic imset polytope \cite{xi2015cim,lrs2022edges,lrs2023geometric,linusson2023rhombus}, which is the convex hull of all of the characteristic imsets corresponding to DAGs on a specified number of vertices.

\begin{question*}
What are the edges of the cyclic characteristic imset polytope?
\end{question*}

Studying the edges of the characteristic imset polytope can provide additional moves in greedy search algorithms that prevent the algorithm from terminating in a local maximum of a score function instead of the global maximum \cite{lrs2023geometric}.

\subsection{Interventional Experiments} Interventional characteristic imsets for acyclic graphs are introduced in~\cite{hollering2024hyperplanerepresentationsinterventionalcharacteristic}. 
Interventional distributions refer to the distributions of an SEM after some of its defining equations are modified. Such interventions can reveal additional information about the underlying causal graph and learn a more refined model equivalence class.

\begin{question*}
Are cyclic graphs with the same interventional characteristic imsets in the same model equivalence class?
\end{question*}

\subsection{Properties of the Chickering Ideal} The Chickering ideal is a toric ideal that captures complete information about covariance equivalence for acyclic graphs and partial information in the cyclic case. Since the Chickering ideal is closely related to foundational results in causal discovery such as the transformational characterization of covariance equivalence, it would be interesting to better understand what algebraic properties this ideal and the associated Chickering variety have.

\begin{question*}
What is the degree of the Chickering variety? Is it smooth? Is it normal?
\end{question*}

Since a Gr\"obner basis for the Chickering ideal seems intractable, it would be interesting to see arguments that address these questions.

\subsection{Model Equivalence and Characteristic Imset Vectors} 
\Cref{ex:converseFails} shows that imset equivalence is a strictly stronger requirement than covariance equivalence for graphs with the same skeleton. When cycles are allowed in the causal graph, model equivalence is also a strictly stronger requirement than covariance equivalence~\cite{Lacerda2008}. 
\begin{question*}
    What is the relationship between model equivalence and imset equivalence? Do model equivalent graphs with the same skeleton share the same characteristic imset vector?
\end{question*}

\section*{Acknowledgements}

We thank Liam Solus and Mathias Drton for helpful discussions. Part of this research was performed while the authors were visiting the Institute for Mathematical and Statistical Innovation (IMSI), which is supported by the National Science Foundation (Grant No. DMS-1929348). Pardis Semnani was supported by a Vanier Canada Graduate Scholarship, a Canada CIFAR AI Chair grant awarded to Elina Robeva, and an NSERC Discovery Grant (DGECR-2020-00338).

\bibliography{citations.bib}{}
\bibliographystyle{plain}
\end{document}